\theoremstyle{plain}
\newtheorem*{theorem*}{Theorem}
\newtheorem*{principle*}{Principle}
\newtheorem{theorem}{Theorem}
\newtheorem{lemma}[theorem]{Lemma}
\newtheorem{proposition}[theorem]{Proposition}
\newtheorem{corollary}[theorem]{Corollary}
\newtheorem*{lemma*}{Lemma}
\newtheorem*{proposition*}{Proposition}
\newtheorem*{corollary*}{Corollary}
\newtheorem*{conjecture*}{Conjecture}
\newtheorem*{definition*}{Definition}
\theoremstyle{remark}
\newtheorem{remark}[theorem]{Remark}
\theoremstyle{definition}
\newtheorem{example}{Example}[section]
\tikzstyle{startstop} = [rectangle, rounded corners, minimum width=3cm, minimum height=1cm, text centered, text width= 4cm, draw=black]
\tikzstyle{io} = [rectangle, minimum width=3cm, minimum height=1cm, text centered, text width=3cm, draw=black]
\tikzstyle{process} = [rectangle, minimum width=3cm, minimum height=1cm, text centered, text width=4cm, draw=black]
\tikzstyle{decision} = [rectangle, minimum width=3cm, minimum height=1cm, text centered, text width=4cm, draw=black]
\tikzstyle{arrow} = [thick,->,>=stealth]
\author{David Wen}
\title{Automorphisms of Minimal Surfaces of General Type with $K_S^2 = 1$, $p_g = 2$}
  \address{David Wen, National Center for Theoretical Sciences, No. 1 Sec. 4 Roosevelt Rd., National Taiwan University , Taipei, 106, Taiwan}
  \email{dwen@ncts.ntu.edu.tw}
\begin{document}

\newcommand{\bigslant}[2]{{\raisebox{.2em}{$#1$}\left/\raisebox{-.2em}{$#2$}\right.}}

\thanks{}

\begin{abstract} 
We classify the automorphism group of minimal surfaces of general type with $K_S^2 = 1$ and $\rho_g = 2$. Furthermore, we show that the order of the automorphism group is bounded above by 200 and can only have prime factors $p \leq 31$ with $p \neq 29$.
\end{abstract}

\maketitle

\section{Introduction}

Surfaces of general type are classically studied objects and much of the development in the understanding of them are due, in part, to the idea that general type surfaces are the surface analog of curves of genus $g \geq 2$. This can be seen in the many theorems of canonically polarized curves that has a surface analog for general type surfaces. One such theorem, by Xiao, generalizes the classical theorem of Hurwitz which bounds the automorphism group of a curve, $C$, with $g(C) \geq 2$ by $|Aut(C)| \leq 42(2g(C) - 2)$.
\begin{theorem*}[{\cite[Thm. 1]{Xiao94}} ]
Let $S$ be a minimal surface of general type then:
\[
|Aut(S)| \leq 42^2 \cdot K_S^2
\]
\end{theorem*}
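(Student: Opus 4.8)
The plan is to carry out the two–dimensional analogue of the proof of Hurwitz's bound, reducing the surface to a curve by means of a fibration and applying the classical estimate $|\operatorname{Aut}(C)| \leq 42(2g-2)$ twice. Since $S$ is of general type, $G := \operatorname{Aut}(S)$ is finite, so it suffices to bound $|G|$. The guiding heuristic is that $K_S^2$ is the surface analogue of $\deg K_C = 2g-2$, and that the constant $42$ should enter once through the base of a fibration and once through its fibres, producing the square.

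First I would try to produce a $G$-equivariant fibration $f\colon S \to B$ onto a smooth curve whose general fibre $F$ is a smooth curve of genus $g(F) \geq 2$. A natural source of such a pencil is a $G$-invariant sub-system of a pluricanonical system $|mK_S|$, on which $G$ acts linearly, or, when the irregularity $q(S) = h^1(S,\co_S)$ is positive, the Albanese map; after a $G$-equivariant blow-up and Stein factorization one arranges $f$ to be a relatively minimal fibration carrying a $G$-action. The output is a short exact sequence
\[
1 \longrightarrow G_F \longrightarrow G \longrightarrow G_B \longrightarrow 1,
\]
where $G_B$ is the image of $G$ in $\operatorname{Aut}(B)$ and $G_F$ is the subgroup acting fibrewise, hence acting faithfully on a general fibre $F$.

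Applying Hurwitz's theorem to each factor yields $|G_B| \leq 42(2g(B)-2)$ and $|G_F| \leq 42(2g(F)-2)$, provided $g(B) \geq 2$ and $g(F) \geq 2$. To convert this into the stated bound I would use the arithmetic of the fibration: from $K_{S/B} = K_S - f^*K_B$ and $F^2 = 0$, $K_S\cdot F = 2g(F)-2$, one computes $K_S^2 = K_{S/B}^2 + 8\bigl(g(B)-1\bigr)\bigl(g(F)-1\bigr)$, while the slope inequality together with minimality of $S$ gives $K_{S/B}^2 \geq 0$, so that
\[
K_S^2 \;\geq\; 8\bigl(g(B)-1\bigr)\bigl(g(F)-1\bigr) \;=\; 2\,(2g(B)-2)(2g(F)-2).
\]
Combining these estimates,
\[
|G| = |G_F|\,|G_B| \leq 42^2 (2g(F)-2)(2g(B)-2) \leq \tfrac{42^2}{2}\,K_S^2 \leq 42^2 K_S^2,
\]
in fact with a factor of two to spare, which already signals that the sharp constant is forced not here but in the degenerate configurations.

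\textbf{The main obstacle} is precisely that this clean picture covers only the favourable situation, and the bound must survive every degeneration. A surface of general type need not admit any fibration at all, and even when fibrations exist they need not be $G$-invariant, nor need the base or general fibre have genus $\geq 2$; the crude fallback of bounding $|G|$ through its faithful linear action on $H^0(S,mK_S)$ only produces a bound polynomial in $K_S^2$, far weaker than a linear one with constant $42^2$. Hence the real work is a uniform treatment of all cases: constructing the right pencil, or, failing that, controlling $|G|$ through the fixed locus and the induced action on $R^1 f_* \co_S$, and carrying out a delicate analysis of rational and elliptic fibrations where Hurwitz is unavailable. I expect this handling of the low-genus and non-fibred geometry, rather than the estimate above, to carry essentially all the difficulty — exactly as the exceptional $(2,3,7)$ orbifold is what pins down the constant $42$ already in dimension one.
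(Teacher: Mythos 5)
This statement is quoted from Xiao's paper \cite[Thm.\ 1]{Xiao94} and is not proved anywhere in the present paper, so there is no internal proof to compare against; your proposal has to be judged on its own. As a sketch of the strategy behind Xiao's theorem it is pointed in the right direction --- Xiao's argument is indeed organized around $G$-equivariant fibrations, the exact sequence $1 \to G_F \to G \to G_B \to 1$, Hurwitz applied to the fibre, and relative numerical invariants --- and your computation $K_S^2 = K_{S/B}^2 + 2(2g(B)-2)(2g(F)-2)$ together with $K_{S/B}^2 \geq 0$ (Arakelov, for a relatively minimal fibration with $g(F)\geq 2$) is correct in the favourable case.

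The gap, however, is not a peripheral loose end but the entire content of the theorem, and you can see this from the equality case. The paper itself records that the surfaces attaining $|{\rm Aut}(S)| = 42^2 K_S^2$ are quotients of products of Hurwitz curves; for a Hurwitz curve $C$ one has $C/{\rm Aut}(C) \cong \mathbb{P}^1$, so the induced fibration on the extremal surface has base of genus $0$. In other words, every surface realizing the bound lies outside your ``clean picture'' (where you even get the stronger constant $42^2/2$), and the constant $42^2$ is forced precisely in the configurations you set aside: invariant pencils over $\mathbb{P}^1$ or an elliptic base, where Hurwitz gives nothing for $G_B$ and one must instead bound $G_B$ by its action on the branch points of $B \to B/G_B$ via the orbifold Riemann--Hurwitz formula (the $(2,3,7)$ triangle group reappearing on the base), and the case where no $G$-invariant pencil of the right kind exists at all, which Xiao handles by a separate analysis. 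Since none of this is carried out, the proposal establishes the bound only for a class of surfaces that excludes all the extremal ones; it is an honest and well-oriented outline, but not a proof.
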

Very few surfaces actually realize this bound. In fact, the only ones that do arise as quotients of products of Hurwitz curves, which are genus $g \geq 2$ curves that realizes the bounds of automorphism group of curves.

Another such theorem, of curve to surface generalization, by Bombieri highlights the exceptional cases of surfaces of general type based $m$-canonical map as follows:
\begin{theorem*}[{\cite{Bombieri73}, \cite[Thm. 1.1 (ii)]{Catanese87}} ]
Let $X$ be the canonical model of a surface of general type and $\psi_m$ the m-th canonical map, then $\phi_m$ is birational for $m\geq 3$ with the exception of $(K_X^2, p_g) = (1,2)$ and $(2,3)$.
\end{theorem*}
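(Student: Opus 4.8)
The plan is to pass to the canonical model $X$, on which the only singularities are rational double points and $K_X$ is ample; since birationality of $\psi_m$ is unaffected by the minimal resolution, it suffices to prove that $|mK_X|$ defines a birational map for $m \geq 3$. The engine is Reider's theorem applied to $L := (m-1)K_X$, after writing $mK_X = K_X + L$. Recall that $K_X + L$ separates two (possibly infinitely near) points provided $L^2 \geq 10$ and there is no effective ``Reider divisor'' $E$ with $L\cdot E \leq 3$ satisfying one of a short list of numerical conditions on $(L\cdot E, E^2)$. The strategy is therefore to run Reider wherever $L^2 = (m-1)^2 K_X^2 \geq 10$, and to treat the finitely many boundary surfaces separately.

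First I would dispose of the range where Reider applies. Since $K_X$ is ample, every curve satisfies $K_X\cdot E \geq 1$, hence $L\cdot E = (m-1)(K_X\cdot E) \geq m-1$. For $m \geq 5$ this already exceeds $3$, so no Reider divisor exists and $\psi_m$ is an embedding. For $m = 3,4$ one still has $L^2 \geq 10$ as soon as $K_X^2$ is not too small, and the only surviving candidates have $L\cdot E$ equal to $2$ or $3$; these are excluded by combining the Hodge index theorem (which bounds $E^2 \leq (K_X\cdot E)^2/K_X^2$) with the parity constraint $E^2 + K_X\cdot E \equiv 0 \pmod 2$ coming from adjunction. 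This disposes of all surfaces with $L^2 \geq 10$, i.e. everything except $m=3,\ K_X^2 \leq 2$ and $m=4,\ K_X^2 = 1$.

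What remains are the boundary surfaces with very small $K_X^2$, and here I would argue directly from Riemann--Roch and the geometry of the canonical system. For $m \geq 2$ one has $H^1(mK_X) = H^2(mK_X) = 0$, so $h^0(mK_X) = \binom{m}{2}K_X^2 + \chi(\co_X)$ with $\chi(\co_X) = 1 - q + p_g$; the relevant low-$K_X^2$ surfaces are classified, which pins down these dimensions and the structure of $\psi_1$. The exceptions appear exactly when the canonical map itself carries a degree-two involution: for $(K_X^2,p_g)=(1,2)$ the system $|K_X|$ is a pencil over $\pu$ whose general fibre is a genus-two curve, and the fibrewise hyperelliptic involution persists in $|3K_X|$, forcing $\psi_3$ to be two-to-one; for $(K_X^2,p_g)=(2,3)$ the canonical map is a double cover of $\P^2$, and the covering involution has the same effect. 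For every other pair $(K_X^2,p_g)$ in the boundary range no such involution exists (when $p_g \leq 1$ there is no canonical pencil at all), and the Riemann--Roch count shows $\psi_m$ separates the general pair of points, completing the proof.

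The main obstacle is the last paragraph. Away from Reider's numerical threshold the argument cannot be uniform, and the crux is to prove that a degree-two canonical involution is the \emph{only} mechanism that can destroy birationality of $\psi_3$, so that it arises precisely for the two listed invariant pairs and for no others. Establishing this requires the explicit geometry of surfaces with $K_X^2 \leq 2$ --- in particular understanding when the canonical image is a curve, a plane, or all of the expected projective space --- rather than any single cohomological estimate.
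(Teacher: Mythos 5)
First, a point of orientation: the paper does not prove this statement at all --- it is quoted as a known classical theorem of Bombieri (with the refinement of Catanese) and used only as motivation for singling out the invariants $(K_X^2,p_g)=(1,2)$. So there is no in-paper proof to compare against; your proposal has to be judged as a free-standing argument for a deep classical result.

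As such, it correctly identifies the standard modern strategy (Reider's theorem applied to $L=(m-1)K$, reducing everything to $m=3,\ K^2\le 2$ and $m=4,\ K^2=1$), but it has two genuine gaps. The smaller one: Reider's theorem is a statement about nef line bundles on \emph{smooth} surfaces, so you must run it on the minimal model $S$, where $K_S$ is nef but not ample; a $(-2)$-curve $E$ then has $L\cdot E=0$, $E^2=-2$, which \emph{is} one of Reider's exceptional configurations, so your claim that ``no Reider divisor exists'' for $m\ge 5$ is literally false as written. One has to argue separately that failure of separation along $(-2)$-curves is harmless because those curves are contracted in the canonical model. The serious gap is the last paragraph, and you have in effect conceded it: for the boundary invariants $(K^2,p_g)\in\{(1,0),(1,1),(2,0),(2,1),(2,2)\}$ you assert, without argument, that no involution obstructs $\psi_3$ and that a Riemann--Roch count ``shows $\psi_m$ separates the general pair of points.'' Riemann--Roch gives only $h^0(3K)=2K^2+\chi$, which bounds the dimension of the target; it does not by itself separate points, and these low-invariant cases (e.g.\ numerical Godeaux surfaces with $K^2=1$, $p_g=0$) are precisely where the historical difficulty lay --- they required separate, substantial arguments (Bombieri, Miyaoka, Catanese, Reider, Francia) well beyond a cohomological count. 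Likewise the claim that a degree-two canonical involution is the \emph{only} mechanism destroying birationality is the theorem's real content for these surfaces, not an observation one can take for granted. So the proposal is an accurate road map of where the proof must go, but the destination --- the exceptional and near-exceptional cases --- is not reached.
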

This implies that surfaces of type $(K_X^2, p_g) = (1,2)$ and $(2,3)$ behave like curves of genus $2$, at least as the $m$-canonical map is concerned, since the bicanonical map of a genus 2 curve is not birational. For the case of the exceptional surfaces above, the reason for the $3$rd-canonical map not being birational is the same as the case of curves of genus $2$ and that is due to there not being enough sections to give a birational map. As a result, these $m$-canonical maps actually give a ''nice" description of the canonical models of type $(K_X^2, p_g) = (1,2)$ and $(2,3)$ similar to how curves of genus $2$ are hyperelliptic. 

The focus of this paper is on minimal surfaces of general type, $S$, with $K_S^2 = 1$ and $p_g(S) = 2$. These surfaces are interesting due to being one of the exceptional case of Bombieri's Theorem above, which lends itself to an explicit description. Furthermore, we find that these surfaces can appear in fiber structures of threefolds of general type, thus lending itself to studying higher dimensional general type varieties that admit such fibrations.

In this paper, we will describe and classify the automorphism group of minimal surfaces of general type with $K_S^2 = 1$ and $p_g = 2$. In particular, we will prove the following:
\begin{theorem}[Theorem \ref{main}]
Let $S$ be a minimal surface of general type such that $K_S^2 = 1$ and $p_g = 2$, then we have the following:
\begin{enumerate}
	\item $|Aut(S)| \leq 200$
	\item $Aut(S)$ is isomorphic to one of the following:
		\[
		C_m \hspace{1cm} C_m \times C_n \hspace{1cm} C_m \rtimes T
		\]
		with $2 \leq m \leq 10$ and $2 \leq n $, where $C_t$ is the cyclic group of order $t$ and $T$ is isomorphic to one of the following:
		\begin{itemize}
			\item $C_k$ with $k \leq 10$
			\item $D_k$, the dihedral group, with $k \leq 10$ or with order at most $20$
			\item $S_4$
			\item $A_5$
			\item If $m \leq 8$ then possibly $S_5$
		\end{itemize}
	\item If $p$ is a prime number that divides $|Aut(S)|$, then $p \leq 31$ and $p \neq 29$.
\end{enumerate}
\end{theorem}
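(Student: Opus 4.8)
The plan is to exploit the explicit description of these surfaces coming from Bombieri's theorem: a minimal surface $S$ of general type with $K_S^2 = 1$ and $p_g = 2$ has a canonical model that can be realized as a weighted hypersurface, namely a sextic in the weighted projective space $\mathbb{P}(1,1,2,5)$ of the form $w^2 = f_{10}(x,y,z)$, where $x,y$ have weight $1$, $z$ has weight $2$, and $w$ has weight $5$. Since the canonical model is uniquely determined by $S$ and automorphisms of $S$ descend to automorphisms of the canonical model, the first step is to reduce the computation of $\operatorname{Aut}(S)$ to the subgroup of automorphisms of the weighted projective space that preserve the defining equation. Concretely, I would identify $\operatorname{Aut}(S)$ with a subgroup of a group of weighted-homogeneous coordinate transformations, and record the induced action on the three-dimensional space $H^0(S, 2K_S)$ and on $H^0(S, K_S)$, the latter being a pencil carrying a natural $\mathrm{PGL}_2$-type structure.

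Granting the structural classification in parts (1) and (2), which I would assume is already established earlier in the paper through this weighted-hypersurface analysis, part (3) is a purely group-theoretic corollary. The key observation is that every prime $p$ dividing $|\operatorname{Aut}(S)|$ must divide the order of one of the finitely many groups enumerated in part (2). Thus I would proceed by inspecting each family and its building blocks for their prime divisors. For the cyclic and abelian pieces $C_m$ with $2 \le m \le 10$ and the direct factor $C_n$, I must control which primes can appear; the constraint $m \le 10$ on the left factor bounds those primes by $7$, but the factor $C_n$ is a priori unbounded in $n$, so the real content is to show that $n$ itself is constrained by $K_S^2 = 1$ (via the bound $|\operatorname{Aut}(S)| \le 200$ from part (1)) so that $n \le 100$, whence its prime divisors are at most $97$ — and then to sharpen this using the actual geometric restrictions on which cyclic actions occur.

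The heart of the argument is therefore the interplay between the numerical bound $|\operatorname{Aut}(S)| \le 200$ and the admissible group structures. I would argue that once $|\operatorname{Aut}(S)| \le 200$, any prime $p \ge 37$ dividing the order forces $p \in \{37, 41, 43, 47, 53, \dots\}$, and I would rule these out one at a time: a prime $p > 10$ cannot divide the order of the finite factor $T$ (whose largest member $S_5$ has order $120 = 2^3\cdot 3\cdot 5$, with prime divisors only $2,3,5$), so such a $p$ must divide $mn$. Because $m \le 10$, the prime $p$ must divide $n$; combined with $mn \cdot |T| \le 200$ and the requirement that the corresponding cyclic automorphism act faithfully on the canonical model, I would show the only primes $p > 10$ that survive are $p \in \{11,13,17,19,23,31\}$, and that $p = 29$ is geometrically obstructed. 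The final and most delicate step — which I expect to be the main obstacle — is precisely the exclusion of $p = 29$: since $29$ is arithmetically compatible with the bound $200$ (indeed $29 < 200$ and could in principle appear as the order of a cyclic factor), its elimination cannot be purely numerical and must instead come from a geometric incompatibility, namely that an automorphism of order $29$ acting on the pencil $|K_S|$ and on the weighted hypersurface equation $w^2 = f_{10}$ would impose too many eigenvalue constraints on the ten-dimensional space of degree-$10$ forms to leave an invariant smooth member, forcing a contradiction.

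I would therefore organize the proof of part (3) as follows: first reduce to primes $p$ dividing $mn$ with $p > 10$; second, use the action on $H^0(K_S)$ to see that such $p$ acts through a cyclic group on the pencil, linearizing the action and pinning down the weights of the action on $H^0(2K_S)$ and $H^0(5K_S)$; third, translate the existence of an invariant smooth defining equation into a divisibility/eigenvalue condition on $f_{10}$; and finally run this condition over the candidate primes, confirming $11,13,17,19,23,31$ as the genuinely realizable ones and deriving a contradiction for $29$ (and for all $p \ge 37$). The prime $p = 31$ should survive because $31 = 30+1$ pairs naturally with the weight data, whereas $29$ does not — making the character-theoretic bookkeeping of the eigenvalues of a would-be order-$29$ automorphism the crux of the entire statement.
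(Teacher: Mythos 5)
Your setup is the same as the paper's (reduce to the canonical model $w^2 = F_{10}(x,y,z)$ in $\mathbb{P}(1,1,2,5)$, lift $\operatorname{Aut}(S)$ to coordinate transformations, and split the action between the pencil $H^0(K_S)$ and the scaling part), and your reduction of part (3) to primes $p>10$ dividing the cyclic factor $n$ is sound: in the paper, too, any prime $p>10$ is forced into the ``diagonal'' case where the automorphism scales two linear factors, because the M\"obius image acting on at most $10$ marked points has order at most $20$ (or $24$), with prime divisors at most $7$. However, there are two genuine gaps. First, you explicitly assume parts (1) and (2) rather than proving them; the statement under review is the full theorem, and the bound $200$ together with the list of admissible $T$ (in particular the exclusion of the icosahedral/octahedral symmetries on $10$ marked points, which is what brings $240$ down to $200$) is most of the content. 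Second, and more importantly, your treatment of the crux --- why $29$ is excluded while $11,13,17,19,23,31$ occur --- never becomes an argument. You gesture at ``too many eigenvalue constraints'' and at the numerology ``$31 = 30+1$ pairs naturally with the weight data, whereas $29$ does not,'' but you do not identify the actual divisibility condition or run it.

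The condition you need is concrete and finite. Writing $F_{10} = \sum_i q_i(x,y)z^i$ and assuming the automorphism acts by $x \mapsto \alpha x$, $y \mapsto \beta y$ on the two linear factors, each nonzero monomial $x^{n_i}y^{m_i}z^i$ (with $n_i+m_i = 10-2i$) imposes $\alpha^{n_i}\beta^{m_i}=1$; taking two such monomials (indices $p,q$, one of which must have $i\in\{0,1\}$ by normality and one of which is $z^5$ by the canonical-singularity constraint) yields $\beta^{N}=1$ with $N = |n_q m_p - n_p m_q|$. The proof of part (3) is then the enumeration of all values of $N$ over the admissible exponent pairs, checking that $29$ divides none of them while each of $11,13,17,19,23,31$ divides some realizable $N$ (e.g.\ $N=62$ for $x y^7 z$ and $x^9 y$, giving order $31$), together with the verification that the corresponding surfaces have only canonical singularities. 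Without this enumeration your argument cannot distinguish $29$ from $31$: both are compatible with $|\operatorname{Aut}(S)|\le 200$ and with a faithful cyclic action on a pencil, so no soft counting of eigenvalues will eliminate one and not the other. As written, the proposal is a correct plan for the easy reductions but leaves the decisive computation, and all of parts (1)--(2), unproved.
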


The proof of the main theorem will follow a sequence of reduction steps to make the automorphism group of $S$ more understandable and computable. Section 2 will begin the discussion on some basic properties of $Aut(S)$ and will show that $Aut(S)$ injects into $GL(2,\mathbb{C})$. Section 3 handles the extreme cases of these surface and show that it is in these cases where $|Aut(S)|$ can have large prime factors. Section 4 considers the general case by translating problem of computing $Aut(S)$ into a problem of computing Mobius transforms that fixes configuration of points on the Riemann sphere. Section 5 concludes the paper by combining the work of the previous sections and also gives further explanations.

\section{Preliminary Results}

\subsection{Minimal Surfaces to Canonical Surfaces}
If $S$ is a minimal surface of general type then $Aut(S) \cong Aut(S_{can})$, thus we can reduces the study of the automorphism group of $S$ to the automorphism group of it's canonical model. For the case of $K_S^2 = 1$ and $p_g(S) = 2$ we have the following description of $S_{can}$.

\begin{proposition}[{\cite[Example 1.3]{Catanese87}} ]
Let $S$ be a minimal surface of general type with $K_S^2 = 1$ and $p_g = 2$, then $S_{can}$ is a hypersurface in $\mathbb{P}(1,1,2,5)$ defined by the degree 10 weighted homogeneous polynomial  $w^2 - F_{10}(x,y,z)$ with at worst canonical singularities.
\end{proposition}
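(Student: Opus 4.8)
The plan is to reconstruct $S_{can}$ directly from its canonical ring $R(S) = \bigoplus_{m\ge 0} H^0(S,mK_S)$, using the fact that $S_{can} = \operatorname{Proj} R(S)$ and that a minimal model and its canonical model share the same canonical ring. First I would pin down the numerical invariants. These surfaces are regular, i.e.\ $q=0$: otherwise Debarre's inequality $K_S^2 \ge 2p_g$ for irregular surfaces of general type would force $K_S^2 \ge 4$, contradicting $K_S^2 = 1$. Hence $\chi(\mathcal{O}_S) = 1 - q + p_g = 3$, and for $m \ge 2$ the vanishing $h^1(mK_S) = h^2(mK_S) = 0$ together with Riemann--Roch gives $P_m = h^0(mK_S) = \chi(\mathcal{O}_S) + \tfrac12 m(m-1)K_S^2 = 3 + \binom{m}{2}$, while $P_1 = p_g = 2$. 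In particular $P_2 = 4$ and $P_5 = 13$.

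Next I would locate generators of $R(S)$ degree by degree. The two sections of $K_S$ give $x,y \in R_1$; since $\dim R_2 = 4$ exceeds the dimension of the span of $x^2, xy, y^2$, there is a new generator $z \in R_2$. Writing $B := \mathbb{C}[x,y,z]$ for the weighted polynomial subring (weights $1,1,2$, so $\operatorname{Proj} B = \mathbb{P}(1,1,2)$), a short computation of its Hilbert function shows $\dim B_3 = 6 = \dim R_3$ and $\dim B_4 = 9 = \dim R_4$, but $\dim B_5 = 12 < 13 = \dim R_5$; this shortfall of one forces a further generator $w \in R_5$. The crucial structural claim is then that $x,y,z,w$ generate $R(S)$ and that $R(S) = B \oplus B\,w$ is free of rank $2$ over $B$. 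I would establish this via the bicanonical map: the four sections $x^2, xy, y^2, z$ of $2K_S$ satisfy the rank-$3$ quadric relation $(x^2)(y^2) = (xy)^2$, so the bicanonical image lies on the quadric cone $\mathbb{P}(1,1,2) \hookrightarrow \mathbb{P}^3$; since $(2K_S)^2 = 4$ while the cone has degree $2$, the morphism $\varphi_{2K}\colon S_{can}\to \mathbb{P}(1,1,2)$ is a finite double cover. The identity $\dim R_m = \dim B_m + \dim B_{m-5}$, which I would verify holds for all $m \ge 0$, then confirms $R(S) = B \oplus B\,w$.

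Given this double-cover structure, $w^2$ is an element of $B_{10}$, yielding a single relation $w^2 = F_{10}(x,y,z)$ with $F_{10}$ weighted-homogeneous of degree $10$; equivalently $\{F_{10} = 0\}$ is the branch divisor in $|\mathcal{O}_{\mathbb{P}(1,1,2)}(10)|$, which is exactly the degree making $K_S = \mathcal{O}(1)$ and $K_S^2 = 1$ via the adjunction/cover formula. Thus $R(S) \cong \mathbb{C}[x,y,z,w]/(w^2 - F_{10})$ with weights $(1,1,2,5)$, and $S_{can} \subset \mathbb{P}(1,1,2,5)$ is the degree-$10$ hypersurface $w^2 = F_{10}(x,y,z)$. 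Finally, since $S$ is minimal of general type, $S_{can}$ is obtained by contracting the $(-2)$-curves on $S$, so its singularities are rational double points, i.e.\ canonical singularities.

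The main obstacle is the structural claim that $x,y,z,w$ generate the entire canonical ring and that $R(S)$ is precisely a free rank-$2$ $B$-module: the matching of Hilbert functions is necessary but not sufficient, and one genuinely needs the geometry of the bicanonical map to exclude additional generators or relations in higher degrees. Concretely, the technical heart is showing that $|2K_S|$ is base-point free (so $\varphi_{2K}$ is a genuine morphism, mapping the base point of the pencil $|K_S|$ to the vertex of the cone), that its image is the full quadric cone, and that the cover has degree exactly $2$ with branch equation of degree $10$. Once this is in hand, the remaining dimension counts and the identification of the singularities as canonical are routine.
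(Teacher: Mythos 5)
The paper does not prove this proposition at all---it is quoted verbatim from Catanese's Example~1.3 and used as a black box---so there is no in-paper argument to compare against. Your reconstruction is the standard Enriques--Horikawa--Catanese computation of the canonical ring, and it is correct in outline: the invariants $q=0$, $\chi=3$, $P_m = 3+\binom{m}{2}$, the generators in degrees $1,1,2,5$, the identification of the bicanonical image with the quadric cone $\mathbb{P}(1,1,2)\subset\mathbb{P}^3$, and the degree count $(2K)^2 = 4 = 2\cdot\deg(\text{cone})$ forcing a double cover are all exactly as in the cited source. You have also correctly isolated where the real work lies: base-point-freeness of $|2K_S|$ at the base point of the canonical pencil, and the fact that $R(S)$ is a free rank-$2$ module over $B=\mathbb{C}[x,y,z]$ rather than merely having the right Hilbert function (the latter also supplies the injectivity of $B_m\to R_m$ in degrees $\le 5$ that your generator count tacitly uses). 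One small correction: a priori $w^2 \in R_{10} = B_{10}\oplus B_5 w$, so the relation first reads $w^2 = F_{10} + G_5 w$; since we are in characteristic zero you complete the square, replacing $w$ by $w - \tfrac{1}{2}G_5$, to reach the stated normal form $w^2 - F_{10}(x,y,z)$.
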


This sets up our convention for the paper. Let $\mathbb{P} := \mathbb{P}(1,1,2,5)$ and $S$ be a surface with canonical singularities defined by:
\[
S := Z(w^2 - F_{10}(x,y,z)) \subset \mathbb{P}
\]
where $x,y$ is of degree 1, $z$ is degree 2, $w$ is degree 5 and $F_{10}$ a degree 10 weighted homogeneous polynomial. This reduction let's us realize $S$ as a hypersurface in $\mathbb{P}$ of high degree which we can use to encode $Aut(S)$ into $Aut(\mathbb{P})$ with the following:

\begin{proposition}
Automorphisms of $S$ extend to automorphisms in $\mathbb{P}$.
\end{proposition}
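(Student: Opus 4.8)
The plan is to recover the ambient space $\mathbb{P} = \mathbb{P}(1,1,2,5)$ intrinsically from $S$ through its canonical ring, so that any automorphism is forced to act on the homogeneous coordinates. Since $S$ is assumed to have canonical singularities it is already its own canonical model, and any automorphism $\sigma \in \text{Aut}(S)$ preserves the dualizing sheaf $\omega_S$ canonically. Hence $\sigma$ induces a graded $\mathbb{C}$-algebra automorphism $\sigma^\ast$ of the canonical ring $R = \bigoplus_{m \geq 0} H^0(S, \omega_S^{\otimes m})$, with $S = \operatorname{Proj} R$, and it is this graded automorphism that I would lift to $\mathbb{P}$.

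First I would pin down the generators of $R$ using the plurigenus formula $P_m = \chi(\mathcal{O}_S) + \binom{m}{2} K_S^2$, where $\chi(\mathcal{O}_S) = 1 - q + p_g = 3$ and $K_S^2 = 1$. This gives $\dim R_1 = 2$, $\dim R_2 = 4$, and $\dim R_5 = 13$, and a direct count of weighted monomials shows $R$ is generated by $x, y \in R_1$, a new element $z \in R_2$, and a new element $w \in R_5$, subject to a single degree-$10$ relation $w^2 - F_{10}(x,y,z)$. This reproduces the presentation $R = \mathbb{C}[x,y,z,w]/(w^2 - F_{10})$ realizing $S$ as a hypersurface in $\mathbb{P}$.

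Next I would analyze how $\sigma^\ast$, which preserves each graded piece $R_m$, acts on these generators. On $R_1 = \langle x, y \rangle$ it is some $A \in \GL(2,\mathbb{C})$. Because $\sigma^\ast$ preserves the image of $\operatorname{Sym}^2 R_1$ inside $R_2$, it must send $z \mapsto \lambda z + q(x,y)$ with $\lambda \neq 0$ and $q$ quadratic; likewise on $R_5$ it sends $w \mapsto \mu w + p(x,y,z)$ with $\mu \neq 0$ and $p$ a weighted quintic. Declaring $\tilde\sigma$ on $\mathbb{C}[x,y,z,w]$ by these same formulas yields a graded automorphism of the polynomial ring, invertibility being automatic from the block-triangular shape with invertible diagonal blocks $A, \lambda, \mu$. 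Since automorphisms of the well-formed space $\mathbb{P}(1,1,2,5)$ are precisely the graded automorphisms of its coordinate ring, this produces $\tilde\sigma \in \text{Aut}(\mathbb{P})$; and by construction $\tilde\sigma$ descends to $\sigma^\ast$ on $R$, so it preserves the ideal $(w^2 - F_{10})$ and restricts to $\sigma$ on $S$.

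The main obstacle will be the transition between the second and third steps: verifying that the generators $z$ and $w$ are canonically determined only modulo the subalgebra generated in lower degrees, and that this indeterminacy is exactly the freedom present in $\text{Aut}(\mathbb{P})$, namely adding $q(x,y)$ to $z$ and $p(x,y,z)$ to $w$. One must confirm that $\sigma^\ast$ cannot fall outside this block-triangular form, which follows from its preserving both the grading and the subalgebra generated by $R_1$, and that the identification of $\text{Aut}(\mathbb{P})$ with graded coordinate-ring automorphisms genuinely holds for the weights $(1,1,2,5)$. Once this bookkeeping is in place the extension is forced and the proposition follows.
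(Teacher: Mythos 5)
Your argument is correct, and it shares its core idea with the paper's proof --- namely that the graded pieces $H^0(\mathbb{P},\mathcal{O}(m))\cong H^0(S,\omega_S^{\otimes m})$ are intrinsic to $S$, so $\mathrm{Aut}(S)$ acts on them compatibly with multiplication --- but the two proofs finish differently. The paper identifies the section spaces for $m\le 9$ via the restriction sequence and then appeals to very ampleness of $\mathcal{O}(5)$ on $\mathbb{P}$ to conclude that the linear action extends to an automorphism of the ambient space. You instead reconstruct the entire canonical ring $R$, verify from the plurigenera that it is generated in degrees $1,2,5$ with a single relation in degree $10$, and lift the graded automorphism $\sigma^*$ to a block-triangular substitution $x,y\mapsto A(x,y)$, $z\mapsto\lambda z+q(x,y)$, $w\mapsto\mu w+p(x,y,z)$ of $\mathbb{C}[x,y,z,w]$, which visibly preserves the ideal $(w^2-F_{10})$. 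Your route is longer but buys two things: it produces the explicit normal form for the lifted automorphism that the paper needs anyway in Proposition \ref{finAutWProj}, and it is more robust at the key step, since $\mathcal{O}(5)$ is in fact not very ample on $\mathbb{P}(1,1,2,5)$ (every weighted-degree-$5$ monomial vanishes at $(0{:}0{:}1{:}0)$, so the system has a base point there), whereas your ring-theoretic lift requires no positivity statement at all. The only points worth making explicit in a final write-up are the inputs you use silently: $q(S)=0$ (so that $\chi(\mathcal{O}_S)=3$ and the plurigenus count is as claimed) and the linear independence of $x^2,xy,y^2$ in $R_2$, both of which are standard for these surfaces and are implicit in the quoted description of $S_{can}$.
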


\begin{proof}
As $S$ is a degree $10$ hypersurface in $\mathbb{P}$, then for all $m \leq 9$
\[
H^0(\mathbb{P}, \mathcal{O}(m)) \xrightarrow{\sim} H^0(S, \mathcal{O}(m))
\]
and $H^0(S, \omega_S^{\otimes m}) \cong H^0(S,\mathcal{O}(m))$. Furthermore $Aut(S)$ acts on $H^0(S, \omega_S^{\otimes m}) \cong H^0(\mathbb{P}, \mathcal{O}(m))$ and $H^0(\mathbb{P}, \mathcal{O}(5))$ is very ample, so $Aut(S)$ extends to an automorphism of $\mathbb{P}$.
\end{proof}
Thus automorphisms of $S$ extend to automorphisms of $\mathbb{P}$ that maps $S$ to itself. Equivalently, this is the same as asking for an automorphism of the graded ring $\mathbb{C}[x,y,z,w]$, with appropriate grading, that is semi-invariant on $w^2 - F_{10}(x,y,z)$. Automorphisms of $\mathbb{P}$ can be completely understood from it's action on $x,y,z$ and $w$ and must preserve degrees. Throughout this paper, we will treat automorphisms of $S$ and $\mathbb{P}$ as automorphisms of corresponding graded ring associated to them.

\subsection{Automorphisms of $S$ via automorphisms of $\mathbb{P}$}

To understand the automorphisms of $S$, we will utilize our understanding of automorphisms of $\mathbb{P}$. Automorphisms of $\mathbb{P}$ arise from the automorphisms of the graded ring $\mathbb{C}[x,y,z,w]$, under the grading of $(1,1,2,5)$ for $(x,y,z,w)$ modulo the $\mathbb{C}^*$-action. If we write
\[
\mathbb{C}[x,y,z,w] = \oplus_{i = 0}^\infty V_i
\]
where $V_i$ is the $i$-th graded part, then an automorphism $\phi \in Aut(\mathbb{P})$ can be realized as a sequence of linear maps $(A_i: V_i \rightarrow V_i)_{i}$ that must satisfy polynomial equations that respect the linear maps of lower degree grading. For example, $V_1$ is generated by $\{ x,y \}$ while $V_2$ is generated by $\{ x^2, xy, y^2, z \}$, so $A_2: V_2 \rightarrow V_2$ when restricted to $\{ x^2, xy, y^2 \}$ must agree with the action defined by $A_1: V_1 \rightarrow V_1$ when extended to the subring $\mathbb{C}[x,y]$. As $\mathbb{C}[x,y,z,w]$ is generated with degree $1,2$ and $5$ components, $\phi$ can be completely understood via:
\[
(A_1, A_2, A_5) \in GL(V_1) \times GL(V_2) \times GL(V_5)
\]
Since the equations between the variables of $A_1$, $A_2$ and $A_5$ are polynomial, there is some algebraic subgroup $H \leq GL(V_1) \times GL(V_2) \times GL(V_5)$ that satisfies these conditions in defining an automorphism of $\mathbb{P}$. Lastly, to obtain $Aut(\mathbb{P})$, we quotient out the $\mathbb{C}^*$-action on $H$. This gives the following diagram set up for this section:
\[
\begin{tikzcd}
\displaystyle
& H \arrow[d] \arrow[r, hook] & GL(V_1) \times GL(V_2) \times GL(V_5)\\
Aut(S) \arrow[r, hook] & Aut(\mathbb{P}) \cong H \slash \mathbb{C}^* & \\
\end{tikzcd}
\]

In this section, to get a better understanding of $Aut(S)$, we will construct a group morphism $Aut(S) \hookrightarrow H$ which would make the diagram commute. In particular, we will complete the diagram as follows:
\[
\begin{tikzcd}
\displaystyle
GL(2,\mathbb{C}) \arrow[r, hook] & H \arrow[d] \arrow[r, hook] & GL(V_1) \times GL(V_2) \times GL(V_5)\\
Aut(S) \arrow[r, hook] \arrow[u, hook] & Aut(\mathbb{P}) \cong H \slash \mathbb{C}^* & \\
\end{tikzcd}
\]

To accomplish this, we first need the following proposition on finite subgroups of $Aut(\mathbb{P})$.
\begin{proposition}
\label{finAutWProj}
Let $G \leq Aut(\mathbb{P})$ be a finite subgroup, then for all $\psi \in G$, there for each $\phi \in H$ such that $\phi \mapsto \psi$ the following holds:
\begin{align*}
\phi(w) &= \alpha w\\
\phi(z) &= \beta z\\
\phi(x) &= ax +cy\\
\phi(y) &= bx + dy
\end{align*}
for some $\alpha, \beta \in \mathbb{C}^*$ and
\[
A = 
\begin{pmatrix}
a & b \\
c & d
\end{pmatrix}\in GL_2(\mathbb{C})
\]
such that $A$ is diagonalizable.
\end{proposition}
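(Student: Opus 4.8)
The plan is to exploit that a finite group acts completely reducibly: I would use this first to strip the lower-weight tails from the images of $z$ and $w$ (after a harmless change of coordinates on $\mathbb{P}$), and then read off the diagonalizability of $A$ from finiteness alone. Before anything else I would record the shape that any representative $\phi \in H$ is forced to have by degree considerations: since $\phi$ preserves the grading, $\phi(x),\phi(y)\in V_1$ give a matrix $A \in GL(V_1) = GL_2(\mathbb{C})$, while $\phi(z) = \beta z + q(x,y)$ with $\beta \in \mathbb{C}^*$ and $q$ a binary quadratic, and $\phi(w) = \alpha w + g(x,y,z)$ with $\alpha \in \mathbb{C}^*$ and $g \in \mathbb{C}[x,y,z]_5$. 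Thus the only real content of the proposition is that the tails $q$ and $g$ can be made to vanish and that $A$ is diagonalizable.

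To kill the tails I would pass to the preimage $\tilde G \subseteq H$ of $G$ under $H \to Aut(\mathbb{P}) = H/\mathbb{C}^*$. This $\tilde G$ is an extension of the finite group $G$ by the central $\mathbb{C}^*$, hence an extension of a finite group by a torus, and therefore linearly reductive in characteristic $0$, i.e. its rational representations are completely reducible. Now $\tilde G$ acts genuinely linearly on $V_2$ and on $V_5$. The subspace $\operatorname{Sym}^2 V_1 = \langle x^2, xy, y^2\rangle \subseteq V_2$ is $\tilde G$-invariant because $\phi$ is a ring map sending $V_1$ into $V_1$, and likewise $\mathbb{C}[x,y,z]_5 \subseteq V_5$ is $\tilde G$-invariant. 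By complete reducibility each of these invariant subspaces admits a $\tilde G$-invariant complementary line, spanned by some $z' = z + q_0(x,y)$ and $w' = w + g_0(x,y,z)$ respectively. Replacing $z,w$ by $z',w'$ is an automorphism of $\mathbb{P}$ that conjugates $G$, and in the new coordinates every representative of every $\psi \in G$ acts by $\phi(z') = \beta z'$ and $\phi(w') = \alpha w'$; that is, both tails vanish simultaneously for all of $G$ at once.

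For the diagonalizability of $A$, I would argue purely from finiteness: if $\psi \in G$ has order $n$, then any representative satisfies $\phi^n = \sigma_\mu$ for some $\mu \in \mathbb{C}^*$, where $\sigma_\mu$ denotes the scaling automorphism $(x,y,z,w)\mapsto(\mu x,\mu y,\mu^2 z,\mu^5 w)$. Restricting to $V_1$, where $\sigma_\mu$ acts as $\mu I$, gives $A^n = \mu I$, so $A$ has finite order in $PGL_2(\mathbb{C})$; its minimal polynomial divides $t^n - \mu$, which has $n$ distinct roots, and hence $A$ is diagonalizable. Note that this conclusion is unaffected by the coordinate changes above, which only modify $z$ and $w$.

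I expect the main obstacle to be the bookkeeping around the $\mathbb{C}^*$-ambiguity. Because $G$ a priori acts only projectively on each graded piece $V_i$, one cannot apply Maschke's averaging directly to produce the invariant complements, and the cleanest way around this is precisely the passage to the reductive group $\tilde G$, so that complete reducibility yields a single invariant line valid for every element of $G$. The remaining care is simply to check that $\operatorname{Sym}^2 V_1 \subseteq V_2$ and $\mathbb{C}[x,y,z]_5 \subseteq V_5$ really are $\tilde G$-stable, which is immediate from $\phi$ being a graded ring homomorphism.
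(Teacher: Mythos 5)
Your proof is correct, and it takes a genuinely different route from the paper's. The paper argues element by element: it diagonalizes $A$, writes $\phi(z)=\beta z+\gamma x^2+\delta y^2+\eta xy$, and tries to force $\gamma=\delta=\eta=0$ directly from the condition $\phi^k(z)=t^2z$ by manipulating the geometric sums $\sum_{i=1}^{k}\beta^{k-i}a^{2i}$. You instead package the whole group at once: the preimage $\tilde G$ of $G$ in $H$ is finite-by-torus, hence linearly reductive in characteristic $0$, so the $\tilde G$-stable subspaces $\operatorname{Sym}^2V_1\subseteq V_2$ and $\mathbb{C}[x,y,z]_5\subseteq V_5$ admit invariant complementary lines, and choosing coordinates $z',w'$ along those lines kills both tails for every element of $G$ simultaneously. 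This buys two real advantages. First, a single coordinate change valid for all of $G$ is what is actually needed downstream, where $Aut(S)$ is to be embedded in $GL(2,\mathbb{C})$ via a common normal form. Second, your argument is immune to a pitfall in the direct computation: when $\beta\neq a^2$ but $\beta^k=a^{2k}$, one has $\sum_{i=1}^{k}\beta^{k-i}a^{2i}=a^2(\beta^k-a^{2k})/(\beta-a^2)=0$, so the system of equations does not force $\gamma=0$; for instance $\phi:(x,y,z,w)\mapsto(x,y,-z+x^2,w)$ has order $2$ in $H$ yet $\phi(z)\neq\beta z$ in the given coordinates. The conclusion therefore genuinely requires a substitution of the form $z\mapsto z+q_0(x,y)$ (here $z'=z-\tfrac12x^2$ works), which your invariant-complement argument produces explicitly, whereas the paper's proof only changes coordinates in $x,y$. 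The one caveat is that, as you note, the proposition must then be read as holding after such a change of coordinates on $\mathbb{P}$, i.e.\ up to conjugating $G$ by an automorphism of $\mathbb{P}$, rather than for each representative in the original coordinates; with that reading, your diagonalizability step ($A^n=\mu I$ with $t^n-\mu$ separable, unaffected by the substitutions in $z$ and $w$) and the rest of your argument go through.
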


\begin{proof}
Given $\psi \in G$ and $\phi \in H$ as above, we let $k$ be the order of $\psi$. By definition, $\phi$ must act on $\mathbb{C}[x,y]_1$, by some $A_1 \in GL(\mathbb{C}[x,y]_1)$ on the vectorspace of homogeneous linear polynomials, such that $A^k = t \cdot I_2$ with $t \in \mathbb{C}^*$. Thus $A$ is diagonalizable.

As $A$ is diagonalizable, there exists some $Q \in GL(\mathbb{C}[x,y]_1)$ such that $Q^{-1}AQ = D$ where $D$ is a diagonal matrix. Then up to an automorphism of $\mathbb{P}(1,1,2,5)$ using the change of basis matrix $Q$ on $x$ and $y$, we can reduce to the case where $\phi(x) = ax$ and $\phi(y) = dy$ with $a^k = d^k = t$. Now the homogenous degree 2 polynomials in the graded ring, $\phi$ has the following action on the generators:
\begin{align*}
\phi(z) &= \beta z + \gamma x^2 + \delta y^2 + \eta xy\\
\phi(x^2) &= a^2x^2\\
\phi(xy) &= adxy\\
\phi(y^2) &= d^2y^2
\end{align*}
As $\psi^k = id_{\mathbb{P}}$ this implies $\phi^k(z) =  t^2 z$ which sets up the following equation:
\[
\beta^kz + \sum_{i = 0}^k \gamma \beta^{k - i}a^{2i}x^2 +  \sum_{i = 0}^k \delta \beta^{k - i}d^{2i}y^2 +  \sum_{i = 0}^k \eta \beta^{k - i}(ad)^i xy = t^2 \cdot z 
\]
with $\beta^k = a^{2k} = (ad)^k = d^{2k} = t^2$. As $\{z,x^2,xy,y^2\}$ is a basis for the degree 2 component of the graded ring, this gives the following system of equations:
\begin{align*}
\beta^k &= t^2\\
\sum_{i = 1}^{k} \gamma \beta^{k - i}a^{2i} &= 0\\
\sum_{i = 1}^{k} \delta \beta^{k - i}d^{2i} &= 0\\
\sum_{i = 1}^{k} \eta \beta^{k - i}(ad)^{i} &= 0
\end{align*}
From the second equation, we can factor out $\gamma$:
\[
\gamma \left ( \sum_{i = 1}^{k} \beta^{k - i}a^{2k} \right ) = 0
\]
and assume $\displaystyle \sum_{i = 1}^{k} \beta^{k - i}a^{2k} = 0$ then:
\[
(\beta - a^{2}) \left ( \sum_{i = 1}^{k} \beta^{k - i}a^{2k} \right ) = \beta^{k+1} - a^{2k+ 2} = 0
\]
From before, $a^{2k} = \beta^k = t^2$, which implies $t^2(\beta - a^2) = 0$. As $t^2 \neq 0$, then $\beta = a^2$ but:
\[
0 = \gamma \sum_{i = 1}^{k} \beta^{k - i}a^{2i} = \gamma \sum_{i = 1}^{k} \gamma \beta^k = \gamma \cdot k \cdot t^2
\]
which is only possible if $\gamma = 0$. The same argument will show that $\delta = \eta = 0$. Thus $\phi(z) = \beta z$ with $\beta \in \mathbb{C}^*$. We can then apply the same argument to the degree 5 component of the graded ring to show that $\alpha^k = t^5$ and that $\phi(w) = \alpha w$. 
\end{proof}

As $Aut(S)$ is finite, the elements of the pullback of automorphisms of $S$, treated as automorphisms of $\mathbb{P}$, can be realized in the above form. To show that $Aut(S)$ can  be embedded into $GL(2,\mathbb{C})$, we need to use the restriction that $S$ only has canonical singularities.

\begin{proposition}[\cite{Reid87} ]
Let $S \subset \mathbb{P}$ is a canonical surface defined by $w^2 - F_{10}(x,y,z)$, then $F_{10}$ must have the term $a z^5$ with $a \neq 0$.
\end{proposition}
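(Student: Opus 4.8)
The statement concerns the single monomial $z^5$, so the whole question localizes at the coordinate point $P_z = [0:0:1:0]$, which is the unique singular point of $\mathbb{P} = \mathbb{P}(1,1,2,5)$ coming from the weight-$2$ variable (a $\frac{1}{2}(1,1,1)$ cyclic quotient singularity); the other special point $P_w = [0:0:0:1]$ never lies on $S$, since the defining equation restricts to $w^2 = 1$ there. The plan is to first observe that $z^5$ is the only degree-$10$ monomial in $z$ alone, so $F_{10}(0,0,1) = a$ is exactly the coefficient of $z^5$, and hence $P_z \in S$ if and only if $a = 0$. The proposition is therefore equivalent to the following: if $S$ has only canonical singularities, then $S$ cannot pass through $P_z$.

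First I would pass to the analytic-local chart at $P_z$. Using the $\mathbb{C}^*$-action to set $z = 1$ leaves the residual stabilizer $\mu_2 = \{\pm 1\}$, and in the coordinates $x' = x/\sqrt{z}$, $y' = y/\sqrt{z}$, $w' = w/z^{5/2}$ the chart is $\mathbb{C}^3_{x',y',w'}/\mu_2$, with $\mu_2$ acting by $-1$ on all three coordinates. By weighted homogeneity the equation becomes $w'^2 = f(x',y')$ with $f(x',y') := F_{10}(x',y',1)$, so that $S$ is locally $\widehat S/\mu_2$ where $\widehat S = \{w'^2 = f(x',y')\} \subset \mathbb{C}^3$. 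Note that $f$ contains only even-degree monomials, so $\mu_2$ preserves $\widehat S$, and $f(0,0) = a = 0$ under our assumption, which forces the origin to be a singular point of $\widehat S$.

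The key computation is the action of $\mu_2$ on the dualizing sheaf. As $\widehat S$ is a hypersurface it is Gorenstein, and $\omega_{\widehat S}$ is generated by the Poincar\'e residue
\[
\eta = \operatorname{Res} \frac{dx' \wedge dy' \wedge dw'}{w'^2 - f(x',y')}.
\]
Under $g : (x',y',w') \mapsto (-x',-y',-w')$ the denominator is invariant while the $3$-form in the numerator picks up $(-1)^3$, so $g^*\eta = -\eta$. Thus $\eta$ is anti-invariant, and since any anti-invariant function must vanish at the unique fixed point, $\omega_{\widehat S}$ has no $\mu_2$-invariant local generator near the origin. Because $\mu_2$ acts without pseudoreflections, $\omega_S$ is the reflexive sheaf of invariants $(\pi_* \omega_{\widehat S})^{\mu_2}$, which therefore fails to be invertible at $P_z$; equivalently $\omega_S = \mathcal{O}_S(1)$ is not Cartier there and $S$ is non-Gorenstein at $P_z$ (its canonical index is $2$). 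This contradicts the hypothesis, since canonical surface singularities are rational double points and hence Gorenstein, so we conclude $a \neq 0$.

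The main obstacle is the middle step: correctly identifying the analytic-local model as the quotient $\widehat S/\mu_2$ and tracking the induced action on $\omega_{\widehat S}$, including the claim that anti-invariance of the generating residue form is equivalent to the non-Gorenstein (index-$2$) property of the quotient. Once the computation $g^*\eta = -\eta$ is in place, the argument is uniform in $f$ and requires no case analysis on the rank of the quadratic part of $f$ or on its higher-order terms; the only additional care needed is the standard fact that $\omega_{V/G} = (\pi_*\omega_V)^G$ when $G$ acts freely in codimension one, which applies here because $-1$ on $\widehat S$ fixes only the isolated singular point.
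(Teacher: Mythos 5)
Your proof is correct, but it takes a genuinely different route from the paper. The paper argues via discrepancies: following Reid's ``Young Person's Guide,'' it takes the weighted blowup of the $\frac{1}{2}(1,1,1)$ point of the ambient space, computes the discrepancy of the induced exceptional divisor on $S$ as $\alpha(wxy)-\alpha(w^2-f)-1=\frac{3}{2}-\alpha(w^2-f)-1<0$, and concludes the point is worse than canonical. You instead exploit a fact special to surfaces: canonical surface singularities are Du Val, hence Gorenstein, whereas the local model $\widehat S/\mu_2$ at $P_z$ has anti-invariant dualizing form and so has canonical index $2$. Your route avoids the toric resolution and discrepancy bookkeeping entirely and is uniform in $f$; the paper's route is the one that generalizes to higher dimensions and to other thresholds of singularity. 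One step in your write-up deserves a line more: from ``$\eta$ is anti-invariant and every anti-invariant function vanishes at the origin'' you conclude that $(\pi_*\omega_{\widehat S})^{\mu_2}$ is not invertible, but a priori a generator of the invariant submodule need not be a unit multiple of $\eta$. The fix is standard: $(\pi_*\omega_{\widehat S})^{\mu_2}=\mathcal{O}_{\widehat S}^{-}\cdot\eta$, and if the anti-invariants $\mathcal{O}_{\widehat S}^{-}$ were generated by a single element $h$ over the invariants, then $x',y',w'\in(h)$ would force the maximal ideal of the two-dimensional local ring $\mathcal{O}_{\widehat S,0}$ to be principal, which is impossible. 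With that inserted, the argument is complete.
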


\begin{proof}
Assume otherwise then $S \subset \mathbb{P}(1,1,2,5)$ contains $P := (0:0:1:0) \in \mathbb{P}$, which corresponds to a quotient singularity of type $\frac{1}{2}(1,1,1)$ in the $z$-chart of $\mathbb{P}(1,1,2,5)$ and we will proceed to show that $P \in S$ is a singularity that is worst than canonical.

Following the notation in \cite[Sec. 4]{Reid87}, localizing with respect to $z$ gives a chart in $\mathbb{P}$ with the quotient singularity $\frac{1}{2}(1,1,1)$ and $S$ is locally the hypersurface in $\mathbb{P}$ is defined by the equation of the form:
\[
w^2 - f(x,y)
\]
where the degree of the monomials of $f$ is bounded between $2$ and $10$. Computing the discrepancy of the toric resolution of the total space as in \cite[Thm 4.6]{Reid87}, produces an exceptional divisor with discrepancy:
\[
a = \alpha(wxy) - \alpha(w^2 - f(x,y)) - 1
\]
where $\alpha(g)$ is the minimal weight associated to lattice of monomials that appear in $g$ that is contained in the positive cone associated to the affine toric variety. Computing this, we get $\alpha(wxy) = \frac{3}{2}$ and $\alpha(w^2 - f(x,y)) \geq 1$. In which case, $a < 0$  which means that $P \in S$ is not a canonical singularity.
\end{proof}

\begin{proposition}
\label{form}
With the above notation, if $G = Aut(S)$ then for each $\psi \in G$ we can find a unique $\phi \in H$  such that $\phi \mapsto \psi$ where $\phi$ is defind by the following action:
\begin{align*}
\phi(w) &= w\\
\phi(z) &= z\\
\phi(x) &= ax +cy\\
\phi(y) &= bx + dy
\end{align*}
with
\[
A = 
\begin{pmatrix}
a & b \\
c & d
\end{pmatrix}\in GL(2, \mathbb{C})
\]
such that $A$ is diagonalizable with eigenvalues being roots of unity.
\end{proposition}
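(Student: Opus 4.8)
The plan is to start from the normal form already supplied by Proposition \ref{finAutWProj} and then exploit two further ingredients to pin down the scalars that appear there: the semi-invariance of the defining equation $w^2 - F_{10}(x,y,z)$ under $\phi$, and the fact, established in the preceding proposition, that $F_{10}$ contains a nonzero $z^5$ term. Concretely, I would fix $\psi \in G = Aut(S)$ of order $k$, choose any lift $\phi \in H$, and invoke Proposition \ref{finAutWProj} to write it in the form
\begin{align*}
\phi(w) &= \alpha w, & \phi(z) &= \beta z, \\
\phi(x) &= ax + cy, & \phi(y) &= bx + dy,
\end{align*}
with $\alpha,\beta \in \mathbb{C}^*$ and $A$ diagonalizable. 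Since $\psi$ maps $S$ to itself, $w^2 - F_{10}$ is semi-invariant, so $\phi(w^2 - F_{10}) = \mu\,(w^2 - F_{10})$ for some $\mu \in \mathbb{C}^*$.

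The first real step is to extract a relation between $\alpha$ and $\beta$ from this semi-invariance. Comparing the coefficient of $w^2$ on both sides gives $\alpha^2 = \mu$, since $\phi(F_{10})$ lies in $\mathbb{C}[x,y,z]$ and contributes nothing to $w^2$. Next I would observe that the only weight-$10$ monomial in $x,y,z$ that $\phi$ can send to a multiple of $z^5$ is $z^5$ itself: as $\phi(z) = \beta z$ and $\phi$ preserves the $\{x,y\}$-subring, a monomial $x^iy^jz^\ell$ maps to $\beta^\ell z^\ell$ times a form of degree $i+j$ in $x,y$, which can equal $z^5$ only if $\ell = 5$ and $i=j=0$. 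Comparing the $z^5$ coefficients and using that it is nonzero then forces $\beta^5 = \mu$, whence the key identity $\beta^5 = \alpha^2$.

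With this identity in hand, I would carry out the normalization using the residual $\mathbb{C}^*$-freedom in the lift. The scaling $s_\lambda\colon (x,y,z,w) \mapsto (\lambda x, \lambda y, \lambda^2 z, \lambda^5 w)$ represents the identity of $Aut(\mathbb{P})$, and composing $\phi$ with it sends $(\alpha,\beta,A)$ to $(\lambda^5\alpha, \lambda^2\beta, \lambda A)$. Taking $\lambda = \beta^2/\alpha$ and using $\beta^5 = \alpha^2$, a direct check shows $\lambda^2\beta = 1$ and $\lambda^5\alpha = 1$, so after rescaling $\phi(z)=z$ and $\phi(w)=w$. For uniqueness, any two such normalized lifts differ by some $s_\lambda$ fixing $z$ and $w$, forcing $\lambda^2 = \lambda^5 = 1$ and hence $\lambda = 1$. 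Finally, since $\psi^k = \mathrm{id}$, the lift $\phi^k$ represents the identity and so equals some $s_\lambda$; but $\phi^k(z)=z$, $\phi^k(w)=w$ again give $\lambda=1$, so $\phi^k = \mathrm{id}$ and $A^k = I_2$. Thus the minimal polynomial of $A$ divides $X^k-1$, giving diagonalizability with eigenvalues among the $k$-th roots of unity.

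The crux of the argument — and the step I expect to require the most care — is the derivation of $\beta^5 = \alpha^2$ together with the observation that makes it usable: because the weights $2$ and $5$ are coprime, a single scaling parameter $\lambda$ can simultaneously set both $\alpha$ and $\beta$ to $1$, and can do so uniquely. The relation $\beta^5 = \alpha^2$ is exactly the compatibility condition that makes the two normalizations $\lambda^5\alpha = 1$ and $\lambda^2\beta = 1$ solvable at once. Everything else is bookkeeping with the graded action of $\phi$, and the root-of-unity conclusion then falls out automatically once the scalars are trivialized.
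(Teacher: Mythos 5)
Your proposal is correct and follows essentially the same route as the paper: start from the normal form of Proposition \ref{finAutWProj}, use the presence of the $w^2$ and $z^5$ terms in the defining equation together with the residual $\mathbb{C}^*$-action (and the coprimality of the weights $2$ and $5$) to normalize $\phi(z)=z$ and $\phi(w)=w$, then deduce uniqueness and $A^k=I_2$. The only cosmetic difference is that you perform the normalization in a single step via the relation $\beta^5=\alpha^2$, whereas the paper first rescales to make $\phi$ honestly invariant (forcing $\alpha=\pm1$, $\beta^5=1$) and then kills both scalars with a tenth root of unity; your one-step version is, if anything, slightly cleaner.
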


\begin{proof}
We will use the fact that the $\mathbb{C}^*$-action is the kernal of the surjective morphism $H \rightarrow Aut(\mathbb{P})$, and so applying the $\mathbb{C}^*$-action on a representative will produce the same element of $Aut(\mathbb{P})$.

Via proposition \ref{finAutWProj}, automorphisms of finite order of $\mathbb{P}(1,1,2,5)$ has representatives in $H$, defined by the following action on the generators:
\begin{align*}
\phi(w) &= \alpha w\\
\phi(z) &= \beta z\\
\phi(x) &= ax +cy\\
\phi(y) &= bx + dy
\end{align*}
where $\alpha, \beta \in \mathbb{C}^*$ and
\[
A = 
\begin{pmatrix}
a & b \\
c & d
\end{pmatrix}\in GL_2(\mathbb{C})
\]
such that $A$ is diagonalizable. 

Let $\psi \in Aut(S) \leq Aut(\mathbb{P}(1,1,2,5))$ then we can choose $\phi \in H$ with the form above such that $\phi \mapsto \psi \in Aut(\mathbb{P})$. Then $\phi$ must leave $w^2 - F_{10}(x,y,z)$ semi-invariant so:
\[
\phi(w^2 - F_{10}(x,y,z)) = \gamma (w^2 - F_{10}(x,y,z))
\]
for some $\gamma \in \mathbb{C}^*$. Let $\gamma_0 := \zeta_{10}\gamma$ where $\zeta_{10}$ is a primitve $10$-th root of unite, then we can choose another representative of $\psi$ by applying the $\mathbb{C}^*$-action:
\[
\gamma_0: (x,y,z,w) \mapsto (\gamma_0x, \gamma_0y, \gamma_0^2 z, \gamma_0^5 w)
\]
So that $\gamma_0 \circ \phi$ is, in fact, invariant on $w^2 - F_{10}(x,y,z)$. So without loss of generalities, we can assume that $\phi$ is invariant on the polynomial $w^2 - F_{10}(x,y,z)$.

Since $w^2$ and $z^5$ appears in the equation that defines $S = Z(w^2 - F_{10}(x,y,z))$, this implies that the action of $\phi$ on $w$ and $z$ must be of the following form:
\begin{align*}
w &\mapsto \pm w\\
z &\mapsto  \zeta_5^k z
\end{align*}
where $\zeta_5$ is a fifth root of unity. From here, again choose another representative in $H$ using the $\mathbb{C}^*$-action by specifically using a power of a tenth root of unity we can choose $\phi$ to act on $w$ and $z$ as:
\begin{align*}
w &\mapsto w\\
z &\mapsto z
\end{align*}
as the $\mathbb{C}^*$-action only scales $x,y$, we will have that $A$ is still a diagonalizable matrix. To show it has eigenvalue that is a root of unity notice that since $\phi$ fixes $w^2 - F_{10}(x,y,z)$, then it must be true that $A^k = 1$. In which case, this is only possible if the eigenvalues are roots of unity.

Lastly, uniqueness of $\phi$ comes from the fact that if there was another representative $\phi_0$ of the same form that maps to $\psi$, then $\phi \circ \phi_0^{-1}$ is the identity and both maps have to fix $w^2 - F_{10}$. this results in the associated matrix $A, A_0 \in GL(2,\mathbb{C})$ satisfying $AA_0^{-1} = I_2$, so that $A = A_0$ and thus $\phi = \phi_0$.
\end{proof}

This shows that $Aut(S)$ can be realized solely by it's action on $x$ and $y$ and embeds as a finite subgroup of $GL(2,\mathbb{C})$ acting on $\mathbb{C}[x,y]$. Without loss of generalities, we will assume that $Aut(S)$ has the form above with $A \in GL(2,\mathbb{C})$ with $A^k = I_2$.

\subsection{Center of $Aut(S)$}
Having the above realization of $Aut(S) \hookrightarrow GL(2,\mathbb{C})$, allows us to use linear algebra in studying $Aut(S)$. In particular, we have a good understanding of some of the elements of the center of $Aut(S)$ inherited from $GL(2,\mathbb{C})$. In the following, let $\phi \in Aut(S)$ be an automorphism of $S$ with associated $A \in GL(2,\mathbb{C})$.

\begin{lemma}
\label{scale}
If the eigenvalues of $A$ given above are both equal to the same root of unity $\lambda$, then $\lambda^{2k} = 1$, for some $k \in \{0,1,2,3,4,5\}$.
\end{lemma}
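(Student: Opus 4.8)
The plan is to combine the explicit invariance of $F_{10}$ under $\phi$ with the hypothesis that $S$ has only canonical singularities. First I would note that since $A$ is diagonalizable with both eigenvalues equal to the single root of unity $\lambda$, it must be scalar: $A = \lambda I_2$. By Proposition \ref{form} we may take $\phi$ to be genuinely invariant on $w^2 - F_{10}(x,y,z)$ and to fix both $w$ and $z$. Since $\phi(x) = \lambda x$ and $\phi(y) = \lambda y$, invariance of $w^2 - F_{10}$ together with $\phi(w^2) = w^2$ forces
\[
F_{10}(\lambda x, \lambda y, z) = F_{10}(x,y,z).
\]

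Next I would expand $F_{10} = \sum c_{abc}\, x^a y^b z^c$ over the monomials of weighted degree $a + b + 2c = 10$. Comparing coefficients and using linear independence of the monomials, every term actually occurring in $F_{10}$ satisfies $\lambda^{a+b} = 1$. Because $a + b = 10 - 2c$ is always even, each such relation has the shape $\lambda^{2k} = 1$ with $k = 5 - c$, and the constraint $0 \le c \le 5$ gives exactly $k \in \{0,1,2,3,4,5\}$. Thus the entire content of the lemma is reduced to exhibiting a single monomial of $F_{10}$ with $a + b > 0$ (equivalently $c \le 4$, i.e.\ $k \ge 1$); the degenerate value $k = 0$ corresponds only to the $z^5$ term, which yields the vacuous relation $\lambda^0 = 1$.

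The decisive step is therefore to rule out the possibility that $F_{10}$ is a pure power of $z$. The earlier proposition (after Reid) already guarantees that $F_{10}$ contains a nonzero $z^5$ term; if in addition it contained no monomial involving $x$ or $y$, then the only admissible term would be $z^5$ itself, so $S = Z(w^2 - c\,z^5)$ with $c \neq 0$. Working in the chart $x \neq 0$, where $\mathbb{P}(1,1,2,5)$ is smooth so that the ordinary Jacobian criterion applies, the gradient $(0,0,-5c z^4, 2w)$ vanishes precisely along $\{z = w = 0\}$, which cuts out the curve $\{(x:y:0:0)\} \cong \mathbb{P}^1$ inside $S$. A surface singular along a curve cannot have only canonical (hence isolated) singularities, contradicting the standing hypothesis on $S$. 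Consequently $F_{10}$ must contain some monomial with $a + b > 0$, and the associated relation $\lambda^{2k} = 1$ with $1 \le k \le 5$ finishes the proof.

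I expect the main obstacle to be precisely this last step, where one must apply the non-canonicity argument carefully on a weighted projective space: it is important to restrict to a genuinely smooth chart of $\mathbb{P}(1,1,2,5)$ before invoking the Jacobian criterion, and to confirm that the resulting singular locus is genuinely one-dimensional within $S$. Everything else is a routine bookkeeping of exponents, so the weight of the argument rests entirely on converting ``$S$ has canonical singularities'' into the statement that $F_{10}$ depends nontrivially on $x$ and $y$.
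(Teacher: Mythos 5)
Your proof is correct and follows essentially the same route as the paper: decompose $F_{10}$ by powers of $z$, observe that each graded piece has even degree $10-2i$ in $x$ and $y$, and conclude $\lambda^{10-2i}=1$, i.e.\ $\lambda^{2k}=1$ with $k=5-i$. Your additional step ruling out $F_{10}=c\,z^5$ via the one-dimensional singular locus is a sound refinement that the paper only handles implicitly (it allows $k=0$ in the statement and defers the nondegeneracy of the $q_i$ to a separate normality lemma in Section~3).
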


\begin{proof}
If the eigenvalue of $A$ are the same then $A = \lambda \dot  I_2$, which is a scaling matrix by $\lambda$. As $A$ is associated with $\phi \in Aut(S)$, then proposition \ref{form} implies that it acts $x,y$ which has a grading by $z^i$. We can write:
\[
F_{10}(x,y,z) = \sum_{i = 0}^5 q_i(x,y)z^i
\]
with $q_i(x,y)$ being a homogenous polynomial of $\mathbb{C}[x,y]$ of degree $10-2i$. Thus the only way to fix $w^2 - F_{10}(x,y,z)$, is to fix $q_i(x,y)$, in which case we can only have this if $\lambda^{2k} = 1$ where $k = 5 - i$, for every non-constant $q_i(x,y)$ that appears in $F_{10}(x,y,z)$.
\end{proof}

\begin{remark}
The above lemma puts a restriction on possible scaling matrix associated to $Aut(S)$ based on the which $q_i(x,y) \neq 0$. For example, if $q_0$ and $q_1$ are chosen generically, then $\lambda = \pm 1$. Otherwise, if only $q_1$ is non-zero then $\lambda = \zeta_8^k$ where $\zeta_8$ is a primitive $8$-th root of unity.
\end{remark}

\begin{corollary}
Let $\phi \in Aut(S)$ with the associated $A$ as above, then $\phi^{2k} = id_S$, for some $k \in \{0,1,2,3,4,5\}$. Furthermore, $\phi$ is in the center of $Aut(S)$.
\end{corollary}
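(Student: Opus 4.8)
The plan is to treat the two assertions separately, both as direct consequences of the scalar form $A = \lambda I_2$ inherited from the hypothesis of Lemma \ref{scale}.

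For the first claim I would read the order of $\phi$ off its action on the four generators. By Proposition \ref{form} the chosen representative $\phi$ acts by $\phi(z) = z$ and $\phi(w) = w$, while the assumption that the two eigenvalues of $A$ coincide means $A = \lambda I_2$, so that $\phi(x) = \lambda x$ and $\phi(y) = \lambda y$. Lemma \ref{scale} supplies a $k \in \{0,1,2,3,4,5\}$ with $\lambda^{2k} = 1$. Iterating then gives $\phi^{2k}(x) = \lambda^{2k} x = x$ and $\phi^{2k}(y) = \lambda^{2k} y = y$, while $z$ and $w$ were already fixed. Since $\phi^{2k}$ fixes every generator of the graded ring it is the identity automorphism of $\mathbb{P}$, hence $\phi^{2k} = id_S$.

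For the second claim I would exploit that $A = \lambda I_2$ is a scalar matrix and therefore lies in the center of $GL(2,\mathbb{C})$. The content of Proposition \ref{form} is precisely that the assignment $\psi \mapsto A_\psi$ is a well-defined injection $Aut(S) \hookrightarrow GL(2,\mathbb{C})$ that is compatible with composition. So for an arbitrary $\psi \in Aut(S)$ with associated matrix $B$, the scalar $A = \lambda I_2$ commutes with $B$ in $GL(2,\mathbb{C})$, i.e. $AB = BA$; transporting this equality back through the injection yields $\phi\psi = \psi\phi$. As $\psi$ was arbitrary, $\phi$ is central in $Aut(S)$.

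I do not anticipate a genuine obstacle, since both statements are formal consequences of the scalar shape of $A$. The one point I would check carefully is that the correspondence $\psi \mapsto A_\psi$ of Proposition \ref{form} genuinely respects the group law (as a homomorphism, or at worst an anti-homomorphism, into $GL(2,\mathbb{C})$); in either convention a central scalar maps to a central element, so the conclusion that $\phi$ is central is unaffected by the choice. I would also emphasize that it is the uniqueness clause of Proposition \ref{form} that makes the assignment well-defined, and hence what allows the matrix identity $AB = BA$ to be pulled back to an honest equality of automorphisms of $S$.
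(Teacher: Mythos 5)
Your argument is correct and is exactly the derivation the paper intends: the corollary is stated without proof as an immediate consequence of Lemma \ref{scale} (giving $A=\lambda I_2$ with $\lambda^{2k}=1$, hence $\phi^{2k}$ fixes all generators) together with the embedding $Aut(S)\hookrightarrow GL(2,\mathbb{C})$ from Proposition \ref{form}, under which scalar matrices are central. Your added caution about checking that the assignment $\psi\mapsto A_\psi$ respects composition (via the uniqueness clause) is a reasonable point that the paper leaves implicit, and as you note it does not affect the conclusion.
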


\begin{corollary}
\label{center}
Let $G \leq Aut(S)$ be the subgroup generated by associated scaling matrices in $GL(2,\mathbb{C})$, then $|G| \leq 10$.
\end{corollary}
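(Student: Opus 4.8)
The plan is to identify $G$ explicitly as a group of roots of unity determined by the monomials of $F_{10}$, and then read off the bound. By Proposition \ref{form}, a scaling automorphism is one whose associated matrix is $A = \lambda I_2$; such a $\phi$ acts by $x \mapsto \lambda x$, $y \mapsto \lambda y$, $z \mapsto z$, $w \mapsto w$. Assigning to each such $\phi$ its scalar $\lambda$ gives a homomorphism from the set of scaling automorphisms into $\mathbb{C}^*$, and since distinct scalars induce distinct automorphisms this is injective. As every scalar occurring is a root of unity (Lemma \ref{scale}), the generated subgroup $G$ maps isomorphically onto a finite subgroup $\Lambda \subseteq \mathbb{C}^*$; in particular $G$ is cyclic, so it suffices to bound $|\Lambda|$.

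First I would pin down $\Lambda$ exactly. Writing $F_{10}(x,y,z) = \sum_{i=0}^{5} q_i(x,y) z^i$ with $q_i$ homogeneous of degree $10 - 2i$, the automorphism attached to $\lambda I_2$ sends $w^2 - F_{10}$ to $w^2 - \sum_i \lambda^{10-2i} q_i(x,y) z^i$. Hence $\phi$ fixes the defining polynomial precisely when $\lambda^{10 - 2i} = 1$ for every index $i$ with $q_i \neq 0$, the top term $q_5 z^5 = a z^5$ imposing no condition since $10 - 2\cdot 5 = 0$. Consequently $\Lambda = \bigcap_{i\,:\,q_i \neq 0} \mu_{10 - 2i}$, where $\mu_n$ denotes the group of $n$-th roots of unity. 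This upgrades Lemma \ref{scale} from a one-sided constraint to an exact description of the admissible scalars.

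With $\Lambda$ identified, the bound is immediate. Because $S$ is of general type with canonical singularities it cannot be a cone defined by a polynomial in $z, w$ alone, so $F_{10}$ genuinely involves $x, y$ and at least one non-constant $q_{i_0}$ with $i_0 \leq 4$ is nonzero. Fixing such an $i_0$, the inclusion $\Lambda \subseteq \mu_{10 - 2 i_0}$ gives $|G| = |\Lambda| \leq 10 - 2 i_0 \leq 10$, as desired.

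The step I expect to be the main obstacle, or at least the one most easily mishandled, is the passage from individual orders to the order of the generated group. The preceding corollary only tells us that each scaling automorphism has order dividing some $2k \leq 10$; yet a subgroup of $\mathbb{C}^*$ generated by several roots of unity of order at most $10$ can have order equal to their least common multiple, which may well exceed $10$. The resolution is precisely that every admissible scalar is forced to satisfy the \emph{single} relation $\lambda^{10 - 2 i_0} = 1$ coming from one fixed nonzero monomial of $F_{10}$, so that the whole of $G$ lands inside $\mu_{10 - 2 i_0}$ rather than merely being generated by elements of small order. A secondary point to verify carefully is that a non-constant $q_{i_0}$ must indeed appear, which is exactly where the general-type hypothesis on $S$ enters.
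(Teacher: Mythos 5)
Your proof is correct and follows essentially the same route as the paper, which states this corollary without proof as an immediate consequence of Lemma \ref{scale}: every admissible scalar $\lambda$ must satisfy $\lambda^{10-2i}=1$ for each nonzero $q_i$, and some non-constant $q_{i_0}$ with $i_0\leq 4$ must appear (the paper proves the sharper fact that $q_0\neq 0$ or $q_1\neq 0$ via normality). Your explicit identification of $G$ with $\bigcap_{i:\,q_i\neq 0}\mu_{10-2i}$, and in particular your observation that the whole group lies in a \emph{single} $\mu_{10-2i_0}$ rather than merely being generated by elements of order at most $10$, is exactly the point the paper leaves implicit and is the right way to close the gap.
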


This shows there is only a finite number of possible scaling factors allowed for automorphisms of $S$ realized in the form as in proposition \ref{form}. In particular, other than these scaling matrices we only need to understand the action of $Aut(S)$ on $q_i(x,y)$ to completely understand the automorphism group of $S$. From the fundamental theorem of algebra, $q_i$, being homogeneous of two variables, factors completely into linear factors. So $Aut(S)$ can be understood via it's actions on the linear homogeneous polynomials. From this observation, we have the following lemma.

\begin{lemma}
Let  $Y = \{l_k\}$ be the linear factors for $q_i$ for $0 \leq i \leq 4$, then the action of $\phi \in Aut(S)$ on $Y$ can only fix at most two distinct elements of $Y$ or must fix all of them.
\end{lemma}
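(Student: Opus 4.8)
The plan is to reduce the statement to an elementary fact about eigendirections of a diagonalizable $2\times 2$ matrix acting on $\mathbb{P}^1$. First I would record that any $\phi \in Aut(S)$, put in the normalized form of Proposition \ref{form}, fixes $w$ and $z$ and leaves $w^2 - F_{10}(x,y,z)$ invariant. Writing $F_{10} = \sum_{i=0}^5 q_i(x,y)\,z^i$ as in Lemma \ref{scale} and using that $\phi$ fixes $w^2$ and $z$, one has $\phi(F_{10}) = \sum_{i=0}^5 \phi(q_i)\,z^i$ with each $\phi(q_i) \in \mathbb{C}[x,y]_{10-2i}$; matching coefficients of the linearly independent powers $z^i$ forces $\phi(q_i) = q_i$ for every $i$. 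Hence $\phi$ permutes the linear factors of each $q_i$ and so acts as a permutation of the finite set $Y = \{l_k\}$ of all such factors for $0 \le i \le 4$.

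Next I would interpret an element of $Y$ as a point of $\mathbb{P}^1 = \mathbb{P}(\mathbb{C}[x,y]_1)$ and observe that $\phi$ \emph{fixes} a factor $l \in Y$ precisely when $\phi(l) = c\,l$ for some $c \in \mathbb{C}^*$, i.e. when $l$ spans a one-dimensional eigenspace of the induced linear action of $\phi$ on $\mathbb{C}[x,y]_1$; equivalently, when the corresponding point of $\mathbb{P}^1$ is a fixed point of the M\"obius transformation determined by $A$. Thus the fixed points of $\phi$ in $Y$ are exactly those elements of $Y$ that lie among the eigendirections of $A$.

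Then I would invoke that $A$ is diagonalizable (Proposition \ref{form}) and split into two cases according to its eigenvalues. If the two eigenvalues are distinct, $A$ has exactly two one-dimensional eigenspaces, so at most two elements of $Y$ can be fixed. If the two eigenvalues coincide, then $A = \lambda I_2$ is a scalar matrix, every direction is an eigendirection, and $\phi$ fixes every element of $Y$. This dichotomy is precisely the assertion of the lemma.

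The argument is essentially complete once the reduction to eigendirections is made, so I do not expect a genuine obstacle; the only points requiring care are that "fixing a factor" must be read up to scalar (so it counts as an eigendirection rather than a fixed vector), and that the convention for whether $\phi$ acts on linear forms through $A$, $A^{-1}$, or $A^{\mathsf T}$ is immaterial here, since all three have the same number of eigendirections and are scalar simultaneously.
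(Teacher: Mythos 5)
Your proposal is correct and matches the paper's own argument: both reduce "fixing a factor" to the factor being an eigenvector of the diagonalizable matrix $A$, and then use the dichotomy between two one-dimensional eigenspaces and a scalar matrix. The extra preliminary step you include (matching $z^i$ coefficients to see that each $q_i$ is preserved) is a harmless elaboration of what the paper assumes from its earlier discussion.
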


\begin{proof}
The action of $\phi$ on $Y$ is induced from the action of $A$ on $\mathbb{C}[x,y]_1$ the degree 1 grading of the polynomial ring. For $A$ to fix a linear factor it is equivalent to scaling the linear factor which means that $l_j$ is a eigenvector of $A$. As $A$ is invertible and diagonalizable, it can have either two distinct one dimensional eigenspaces or a two dimensional eigenspace which corresponds respectively to fixing two distinct linear factors or fixing all of them.
\end{proof}

The elements of $Y$ are mapped back into $Y$ under the action of $Aut(S)$. From this, there are two cases to consider, the first is where $|Y| = 2$ where the automorphism will be primarily scaling each linear factors. The second is when $|Y| \geq 3$, in which case, $Aut(S)$ permute the elements of $Y$ which can be understood by identifying as Mobius transformation acting on points on a Riemann Sphere. Note that we can not have $|Y| = 1$ since implies the surface $S$ is not normal.

\section{Two Factor Situation}                                                 
In this section, we consider the case of $|Y| = 2$ and show that Theorem \ref{main} holds. Before we proceed, we will need the following:

\begin{lemma}
Let $S$ be a canonical surface as previous defined with the defining equation:
\[
w^2 - \sum_{i = 0}^5 q_i(x,y)z^i
\]
in $\mathbb{P}$, then $q_0(x,y) \neq 0$ or $q_1(x,y) \neq 0$.
\end{lemma}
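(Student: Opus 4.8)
The plan is to argue by contradiction: suppose $q_0 = q_1 = 0$ and exhibit a non-isolated singularity of $S$, which is incompatible with $S$ having at worst canonical singularities. First I would observe that $q_0 = q_1 = 0$ forces $z^2 \mid F_{10}$, so that we may write $F_{10}(x,y,z) = z^2 G(x,y,z)$ with $G = q_2 + q_3 z + q_4 z^2 + q_5 z^3$, where $q_5 \neq 0$ is a nonzero constant by the proposition guaranteeing the $az^5$ term. Geometrically, $S$ is the double cover of $\mathbb{P}(1,1,2)$ branched along $B = \{F_{10} = 0\}$, and divisibility by $z^2$ says that $B$ contains the curve $\{z=0\}$ with multiplicity at least two; the branch divisor is non-reduced along $\{z=0\}$, which is exactly the configuration producing a non-normal cover.

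Next I would make this precise by a direct Jacobian computation. Since $q_0 = 0$, the curve $C := \{z = w = 0\} = \{(x:y:0:0)\}$ lies on $S$, and $C$ is contained in the locus $\{x \neq 0\}\cup\{y \neq 0\}$, the smooth locus of $\mathbb{P}(1,1,2,5)$ (the weight-one coordinates). Working in the affine chart $x = 1$ with coordinates $(y,z,w)$, the surface is cut out by $\Phi := w^2 - z^2 G(1,y,z)$, and one checks that $\Phi$, $\Phi_w = 2w$, $\Phi_z = -z\bigl(2G + zG_z\bigr)$, and $\Phi_y = -z^2 G_y$ all vanish identically along $\{z = w = 0\}$, since every surviving term of $F_{10}$ carries a factor $z^2$. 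Hence $S$ is singular along the entire curve $C$; the analogous computation in the chart $y = 1$ covers the remaining point of $C$.

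Finally I would conclude: $S$ is a hypersurface, hence Cohen--Macaulay and in particular satisfies Serre's condition $S_2$, so normality is equivalent to regularity in codimension one. A one-dimensional singular locus violates $R_1$, so $S$ is not normal; equivalently, a normal surface with only canonical (Du Val) singularities has isolated singular points, whereas $C$ is a whole curve of singular points. This contradicts the standing hypothesis that $S$ has at worst canonical singularities, forcing $q_0 \neq 0$ or $q_1 \neq 0$.

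I do not expect a serious obstacle, but two points deserve care. The argument genuinely needs both $q_0$ and $q_1$ to vanish: if only $q_0 = 0$, then $z \mid F_{10}$ while $z^2 \nmid F_{10}$, the branch divisor is reduced along $\{z=0\}$, and locally $w^2 = z\cdot(\text{unit})$ is smooth, so no contradiction arises, consistent with the lemma asserting only that the two cannot vanish simultaneously. The other point is to confirm that $C$ avoids the quotient singularities $(0:0:1:0)$ and $(0:0:0:1)$ of $\mathbb{P}$, so that the naive Jacobian criterion applies without modification; this is immediate since $C$ lies in the weight-one charts.
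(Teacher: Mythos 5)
Your proposal is correct and follows the same route as the paper's (very terse) proof: assume $q_0=q_1=0$, exhibit the one-dimensional component $\{z=w=0\}$ of the singular locus, and contradict normality. You have simply supplied the Jacobian computation and the Serre-criterion justification that the paper leaves implicit, and your checks (that $C$ lies on $S$ and avoids the quotient singularities of $\mathbb{P}$) are accurate.
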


\begin{proof}
Assume otherwise with $q_0(x,y) = q_1(x,y)= 0$, then the singular locus of $S$ contains a $1$-dimensional component, which implies that $S$ is not a normal surface which is a contradiction.
\end{proof}

\begin{proposition}
\label{twoFactors}
Assume that $|Y| = 2$, then $|Aut(S)| \leq 200$.
\end{proposition}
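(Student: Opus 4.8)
The plan is to put the defining equation into a normal form adapted to the hypothesis $|Y| = 2$, identify $Aut(S)$ with a finite diagonal subgroup of $GL(2,\mathbb{C})$, and then bound its order by a single $2\times 2$ determinant.

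First I would choose coordinates so that the two linear factors are $x$ and $y$. Since each $q_i$ (for $0 \le i \le 4$) is a binary form all of whose linear factors lie in $Y = \{x,y\}$, every $q_i$ is a monomial $q_i = c_i\, x^{a_i} y^{b_i}$ with $a_i + b_i = 10 - 2i$. By Proposition \ref{form} an automorphism is recorded by a diagonalizable $A \in GL(2,\mathbb{C})$ of finite order fixing $w$ and $z$, and invariance of $w^2 - F_{10}$ decomposes, since $z$ is fixed and the $z$-degrees are distinct, into the conditions $q_i \circ A = q_i$ for every $i$. Because $A$ permutes the two linear factors $\{[x],[y]\}$, it must be a monomial matrix, i.e. either diagonal or anti-diagonal in these coordinates.

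Next I would rule out the anti-diagonal (swap) elements. An anti-diagonal $A$ sends $x^{a_i} y^{b_i}$ to a scalar multiple of $x^{b_i} y^{a_i}$, so invariance of each $q_i$ forces $a_i = b_i$ for every present monomial. But then all exponent vectors $(a_i,b_i)$ are parallel to $(1,1)$, the joint stabilizer contains a one-parameter subgroup, and $Aut(S)$ would be infinite, contradicting that $S$ is of general type. Hence $Aut(S)$ consists of diagonal matrices only. Writing $A = \operatorname{diag}(\lambda,\mu)$ with $\lambda,\mu$ roots of unity, invariance reads $\lambda^{a_i}\mu^{b_i} = 1$ for each $i$ with $c_i \ne 0$, so $Aut(S) \cong \operatorname{Hom}(\mathbb{Z}^2/L, \mathbb{C}^*)$ with $L = \langle (a_i,b_i) : c_i \ne 0\rangle$, giving $|Aut(S)| = [\mathbb{Z}^2 : L]$.

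Finally I would bound this index. Finiteness of $Aut(S)$ forces $L$ to have rank $2$, so two of the present forms have linearly independent exponent vectors $v_i, v_j$; since $L \supseteq \langle v_i, v_j\rangle$ we get $[\mathbb{Z}^2 : L] \le [\mathbb{Z}^2 : \langle v_i,v_j\rangle] = |a_i b_j - a_j b_i|$. Using $a_i \le 10 - 2i$ and $b_j \le 10 - 2j$ together with $|X - Y| \le \max(X,Y)$ for $X,Y \ge 0$, this determinant is at most $(10-2i)(10-2j) \le 10\cdot 8 = 80$, with the maximum forced on the pair $\{i,j\} = \{0,1\}$. Thus $|Aut(S)| \le 80 \le 200$. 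The one delicate point, and the crux of the argument, is that an individual monomial such as $x^{10}$ has \emph{infinite} stabilizer in $GL(2,\mathbb{C})$, so the finite bound cannot come from any single form; it must come from the joint stabilizer of two forms with independent exponent vectors, and guaranteeing that such a pair exists is exactly where finiteness (equivalently, $S$ being of general type with $|Y| = 2$) enters. Excluding the swap and maximizing the determinant are then routine.
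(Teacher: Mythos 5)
Your proof is correct, and it follows the same overall strategy as the paper --- normalize the two factors to $x$ and $y$, observe each $q_i$ becomes a monomial, exclude the swap because it would force $a_i=b_i$ throughout and hence an infinite automorphism group, and then bound the diagonal scalings via the $2\times 2$ determinant of two exponent vectors. Where you genuinely diverge is in the final counting step. The paper eliminates one variable to get $\beta^{N}=1$ with $N=|n_qm_p-n_pm_q|\le 80$ (bounded by the same boundary-of-the-region optimization you use), and then multiplies by the number of $\alpha$'s compatible with each $\beta$, arriving at $160$ after handling an exceptional pair of exponent vectors by hand. You instead identify the full solution group as $\operatorname{Hom}(\mathbb{Z}^2/L,\mathbb{C}^*)$ and read off its order as the index $[\mathbb{Z}^2:L]\le |a_ib_j-a_jb_i|\le 80$. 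This is cleaner and strictly sharper: the paper's ``$N$ choices of $\beta$ times $2$ choices of $\alpha$'' double-counts (e.g.\ for exponent vectors $(2,6)$ and $(8,2)$ the group has order exactly $|{\det}|=44$, matching the paper's own Example but not its general accounting), whereas your lattice index is exact when only two monomials constrain and an upper bound otherwise. Your version also makes transparent the one genuinely delicate point, which the paper states as a bullet (``at least $3$ of the $a_i$ are non-zero'') but does not tie explicitly to the counting: finiteness of $Aut(S)$ is what guarantees $L$ has rank $2$, i.e.\ that two present monomials have independent exponent vectors. The net effect is that the two-factor case contributes at most $80$, so the bound $200$ in the theorem is driven entirely by the $|Y|\ge 3$ case.
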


\begin{proof}
Without loss of generalities, we can assume that the linear factors are $x$ and $y$ by a change of basis that maps the two linear factors to $x$ and $y$. So the equation is of the form:
\[
w^2 - \sum_{i = 0}^5 a_i x^{n_i}y^{m_i}z^i
\]
where $n_i + m_i = 10-2i$. In this form, the following is true:
\begin{itemize}
	\item $a_5 \neq 0$ otherwise the surface has worst than canonical singularities.
	\item $a_0 \neq 0$ or $a_1 \neq 0$ otherwise the resulting surface is not normal.
	\item At least $3$ of the $a_i$ are non-zero otherwise there is an infinite number of automorphisms.
	\item We can not have $n_i = m_i$ for all $a_i \neq 0$ as this would also allow for an infinite number of automorphisms. This shows that the automorphisms can not swap the linear factors that are being acted upon.
	\item Automorphisms only act by scaling $x$ and $y$, thus they are eigenvectors of the matrix $A$ associated to automorphism $\phi_A \in Aut(S)$.
\end{itemize}
Under these conditions, a bound can be computed for the order of the automorphism group for such surfaces. The calculations break down by the degree of $z$ and, since $z^5$ must be one of the terms that is non-zero, we only need to consider the following pairs of degrees $(p,q)$ where $p = 0,1$ and $p \neq q = 1,2,3,4$ for the other $z$ graded monomials. If there are more than three pairs then these calculations would provide an upper bound as more monomials would introduce more restrictions on these equations. 

So let $(p,q)$ be as above, then there is an $n_p,m_p,n_q,m_q \in \mathbb{Z}_{\geq 0}$ such that $n_p + m_p = 10-2p$ and $n_q + m_q = 10-2q$ where,
\[
w^2 - a_5z^5 - a_px^{n_p}y^{m_p}z^p - a_qx^{n_q}y^{m_q}z^q
\]
is a part of the equation $w^2 - F_{10}(x,y,z)$, with $a_5, a_p, a_q \neq 0$. Now any automorphism $\phi \in Aut(S)$, acts as follows $\phi(x) = \alpha x$ and $\phi(y) = \beta y$ and must fix the above polynomial. This sets up the following equations:
\begin{align*}
\alpha^{n_p}\beta^{m_p} &= 1\\
\alpha^{n_q}\beta^{m_q} &= 1
\end{align*}
There are only finitely many possibilities for $n_p,m_p,n_q$ and $m_q$ with the following constraints:
\begin{itemize}
	\item $n_p,m_p,n_q,m_q \in \mathbb{Z}_{\geq 0}$
	\item $n_p + m_p = 10-2p$
	\item $n_q + m_q = 10-2q$
\end{itemize}
Solving for $\beta$ gives the following equality:
\[
\beta^{n_qm_p - n_pm_q} = 1
\]
We can apply calculus to compute the maximum values of $N := |n_qm_p - n_pm_q|$. In general, we will be finding the absolute maximum or minimum of $f(x,y,z,w) = xy-zw$ bounded in the region defined by $x,y,z,w \geq 0$ and $x + z = u$ and $y + w = v$. with $u, v \in \mathbb{N}$. Clearly, the absolute max and min can only be realized along the boundary of the region. So set $x = u - z$ and $y = v - w$ and simplify the problem to $f(z,w) = uv - u w - v z$. Since $w,z \geq 0$, the maximum is realized when $w = z = 0$. This results in $x = u$ and $y = v$. Now $p, q \in \{0,1,2,3,4,5\}$ with $p \neq q$, so the maximum for $u$ and $v$ would be $u = 10$ and $v = 8$. Thus $N \leq 80$. 

By symmetry, $\alpha^N = \beta^N = 1$ and by computing the minimum of the greatest common divisors of the possible pairs of $(n_p, n_q)$ and $(m_p, m_q)$, we get that the number of possible solutions for $\beta$ given a solution of $\alpha$ is at most $2$ with the exception of $(6,0)$ and $(4,8)$ but a quick verification will show that the number of solutions of:
\begin{align*}
\alpha^{8} &= 1\\
\alpha^{4}\beta^{6} &= 1
\end{align*}
 is $48$. Thus, the number of solutions is bounded above by $160$ which satisfies $|Aut(S)| \leq 200$.
\end{proof}

\begin{remark}
\label{form1}
From the computations above, the automorphism groups of these surfaces is isomorphic to one of the following:
\[
C_m \hspace{1cm} C_m \times C_n
\]
where $C_k$ is the cyclic group of $k$ elements and $m \leq 80$ and $n \leq 2$.
\end{remark}

\begin{example}
Consider the following surface $S \subset \mathbb{P}(1,1,2,5)$ defined by the weighted homogeneous polynomial
\[
w^2 - z^5 - xy^7z - x^9y
\]
This is a surface of general type with at worst canonical singularities. The automorphisms of such a surface can be realized by scaling $x$ and $y$ giving the following:
\begin{align*}
x &\mapsto \alpha x\\
y &\mapsto \beta y
\end{align*}
and this action must fix the defining equation, which gives the following system of equations:
\begin{align*}
\alpha \cdot \beta^7 &= 1 \\
\alpha^9 \cdot \beta &= 1 
\end{align*}
which gives $N = 7 \cdot 9 - 1 \cdot 1 = 62$. Thus $\alpha^{62} = \beta^{62} = 1$. So $\alpha$ and $\beta$ must be powers of the $62$nd-roots of unity. Furthermore, the minimal greatest common divisor of the exponents of $\beta$ is $1$ and the same for $\alpha$. So we have that for each $\beta$ there is a unique $\alpha$ solution. This shows that the automorphism group is order $62$. 
\end{example}

\begin{example}
Consider the following surface $S \subset \mathbb{P}(1,1,2,5)$ defined by the weighted homogeneous polynomial
\[
w^2 - z^5 - x^2y^6z - x^8y^2
\]
and an automorphism of such a surface realized by scaling:
\begin{align*}
x &\mapsto \alpha x\\
y &\mapsto \beta y
\end{align*}
Then we have the following equations:
\begin{align*}
\alpha^2 \cdot \beta^6 &= 1 \\
\alpha^8 \cdot \beta^2 &= 1 
\end{align*}
which gives $N = 6 \cdot 4 - 2 = 22$. Thus $\alpha^{22} = \beta^{22} = 1$. So $\alpha$ and $\beta$ must be roots of unity with order $22$. Furthermore, the minimal greatest common divisor of the exponents of $\beta$ is $2$ and the same for $\alpha$. So for each $\beta$ there is possibly 2 values for $\alpha$ which would satisfy this equation. Thus the order of this automorphism group is $44$. 
\end{example}

\section{Point Configurations on $\mathbb{CP}^1$}

Assume that $|Y| \geq 3$, then we can reinterpret the problem into a question on the Riemann sphere by treating the distinct linear factors, which we can write as $\{l_i = a_ix + b_iy\}_{i \in I} \subset \mathbb{C}[x,y]_1$ as the following points $\{(a_i : b_i)\} \subset \mathbb{CP}^1$ on the Riemann sphere.

\subsection{Algebraic picture}
\label{algPic}
Treating $\mathbb{C}[x,y]_1$ as a 2-dimensional vector space we can projectivise this space to obtain the Riemann sphere, so that each $l_i = a_ib + b_i y \mapsto (a_i : b_i)$. Then $Y$ maps to a configuration of points in $\mathbb{CP}^1$. From the previous discussion, we can realize an automorphism of $S$ by some $A \in GL(2,\mathbb{C})$, whose action of $\mathbb{C}[x,y]_1$ extends to an action on $\mathbb{C}[x,y]$ and fixes $q_i$. 

Reinterpreting this situation into the Riemann sphere setting, $A$, as induced by $\phi \in Aut(S)$, induces a Mobius transform that will map the points of $\{ (a_i : b_i) \}$ back onto themselves which has added conditions of preserving the grading by $z^i$ and multiplicities of the linear factors for each $q_i$. Lastly, $|Aut(S)| < \infty$, which means $Aut(S)$ must maps onto a finite subgroup of group of Mobius transformations.

Finite subgroups of  Mobius transformations are conjugate to finite subgroups of $SU(2) \cong SO(3, \mathbb{R})$. Furthermore, finite subgroups of $SO(3, \mathbb{R})$ are classified to be cyclic, dihedral or the symmetry group of platonic solids, which give us the following:
\begin{proposition}
Assuming $|Y| \geq 3$ and $K$ the image of $Aut(S)$ in group of Mobius tranformations, then $K$ is isomorphic to a cyclic group, dihedral group or the symmetry of a platonic solid.
\end{proposition}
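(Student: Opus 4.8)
The plan is to reduce the statement to the classical classification of finite rotation groups of the sphere. First I would observe that since $Aut(S)$ is finite, so is its image $K$ in the group of Möbius transformations $\mathrm{PGL}(2,\mathbb{C}) = \mathrm{Aut}(\mathbb{CP}^1)$; the entire content of the proposition is therefore the classification of finite subgroups of $\mathrm{PGL}(2,\mathbb{C})$. The crucial advantage of our situation is that by Proposition \ref{form} the group $Aut(S)$ already sits inside $GL(2,\mathbb{C})$ as a genuine finite subgroup (with each element represented by a diagonalizable $A$ satisfying $A^k = I_2$), and $K$ is simply its projective image. Thus I need not worry about the existence of a finite linear lift, which is the usual technical nuisance in the general $\mathrm{PGL}(2,\mathbb{C})$ classification.

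Next I would conjugate $Aut(S)$ into the unitary group by averaging. Starting from any positive-definite Hermitian inner product on $\mathbb{C}[x,y]_1 \cong \mathbb{C}^2$, I form its average over the finite group $Aut(S)$; this produces a $K$-invariant positive-definite Hermitian form. After a change of basis diagonalizing this form, every element of $Aut(S)$ acts by a unitary matrix, so $Aut(S)$ is conjugate to a finite subgroup of $U(2)$. Passing to the quotient by scalars, the image $K$ becomes a finite subgroup of the projective unitary group $\mathrm{PU}(2) = \mathrm{PSU}(2)$.

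I would then invoke the exceptional isomorphism $\mathrm{PSU}(2) \cong SO(3,\mathbb{R})$, under which the elements of $K$ correspond to rotations of the round sphere $S^2 = \mathbb{CP}^1$. At this point the problem is exactly the classical classification of finite subgroups of $SO(3,\mathbb{R})$, which asserts that every such subgroup is cyclic $C_n$, dihedral $D_n$, or one of the rotation groups $A_4$, $S_4$, $A_5$ of the tetrahedron, the octahedron/cube, and the icosahedron/dodecahedron. This gives precisely the stated trichotomy.

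There is no deep obstacle here, as the argument rests on classical structure theorems rather than on any feature special to our surfaces; the only points demanding care are that the averaging step genuinely applies because we have the honest finite subgroup $Aut(S) \subset GL(2,\mathbb{C})$ to average over, and that the identification is with the \emph{projective} unitary group, since $SU(2)$ is itself the double cover of $SO(3,\mathbb{R})$ rather than isomorphic to it. One should also keep track of which concrete groups arise so that the bookkeeping matches the list used later in the main theorem.
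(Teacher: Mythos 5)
Your proposal is correct and follows essentially the same route as the paper, which likewise reduces the statement to the classical classification of finite subgroups of the Möbius group via conjugation into the (projective) unitary group and the isomorphism with $SO(3,\mathbb{R})$. You supply more detail than the paper does (the averaging argument producing an invariant Hermitian form, and the correct identification $\mathrm{PSU}(2)\cong SO(3,\mathbb{R})$ where the paper loosely writes $SU(2)\cong SO(3,\mathbb{R})$), but the underlying argument is the same.
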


As the equation defining $S$ must have $q_0 \neq 0$ or $q_1 \neq 0$ and the fact that the automorphisms of $S$ must map $q_i$ back to itself. We are primarily concerned with the case where the number of points counted with multiplicities comes from $q_0$ or $q_1$.
\begin{lemma}
Assume that $q_0$ or $q_1$ have more than three factors, then subgroup of Mobius transform that map the configuration of points obtained by mapping $q_0, q_1$ to $\mathbb{CP}^1$, which has degree 10 and 8 respectively, is isomorphic to either a cyclic group of order at most $10$, the dihedral group of at most $10$ elements or the symmetry group of the tetrahedron or octahedron/cube.
\end{lemma}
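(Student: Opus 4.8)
The plan is to combine the classification of finite Möbius subgroups from the preceding proposition with the elementary observation that an automorphism fixing each $q_i$ up to a scalar must permute the roots of $q_i$ on $\mathbb{CP}^1$ while preserving their multiplicities.

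First I would record the non-generic orbit sizes of each finite subgroup $K$ of the Möbius group acting on $\mathbb{CP}^1$: for $K = C_n$, two fixed points and orbits of size $n$; for the dihedral group of order $2n$, one orbit of size $2$ and (two) orbits of size $n$; for the tetrahedral group $A_4$, orbits of sizes $4,4,6$; for the octahedral group $S_4$, orbits of sizes $6,8,12$; and for the icosahedral group $A_5$, orbits of sizes $12,20,30$, every generic orbit having size $|K|$. The single structural constraint is then that, since the distinct roots of $q_i$ form a $K$-invariant set on which the multiplicity is constant along each orbit,
\[
\deg q_i \;=\; \sum_{O} |O|\, m_O, \qquad m_O \ge 1,
\]
the sum ranging over the orbits $O$ occurring in $q_i$, with $\deg q_0 = 10$ and $\deg q_1 = 8$.

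The crux is ruling out $A_5$: its smallest orbit has size $12 > 10 = \deg q_0$ and $12 > 8 = \deg q_1$, so no nonzero polynomial of degree $10$ or $8$ can have an $A_5$-invariant factor multiset; since by hypothesis one of $q_0,q_1$ has more than three, hence a nonempty set of, distinct factors, we conclude $K \ne A_5$. For $K = C_n$, a configuration with more than three distinct points cannot consist only of the two fixed points, so at least one orbit of size $n$ appears, whence $n \le \deg q_i \le 10$ and $|K| \le 10$. For $K$ dihedral the same reasoning forces an orbit of size $n$ (or $2n$) into a configuration of degree at most $10$, which bounds $n$ and hence the order. Finally $A_4$ and $S_4$ are genuinely compatible, since their orbit sizes fit the available degrees: $10 = 4 + 6$ and $8 = 4 + 4$ realize the tetrahedral configurations, while $8 = 8$ realizes $S_4$ as the eight vertices of a cube (the face-centres of the octahedron); one also checks that $10$ has no expression as a sum of $S_4$-orbit sizes, so whenever it is $q_0$ that carries the points the octahedral case is impossible and $q_0$ must vanish.

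The step I expect to be the main obstacle is the bookkeeping in the last paragraph: one must argue with multiplicities rather than distinct points, confirm that each claimed decomposition of $10$ or $8$ is actually achieved by an invariant polynomial (and that the absence of a decomposition genuinely forces a $q_i$ to vanish), and dispose of the degenerate regime of three or fewer factors, where the configuration stabiliser can enlarge (for three points to an $S_3 \cong D_3$) and the orbit count degenerates — this is precisely the case the hypothesis excludes. By contrast the exclusion of $A_5$ is immediate from the size-$12$ minimal orbit, and is the conceptual heart of the lemma.
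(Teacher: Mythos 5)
Your proof is correct, and it rests on the same underlying idea as the paper's: the invariant configuration has at most $10$ (resp.\ $8$) points counted with multiplicity, which is too few to support icosahedral symmetry, while cyclic and dihedral symmetry are bounded by the number of points in a single non-degenerate orbit. The paper's own proof is only a two-sentence sketch ("an $n$-gon with at most $n=10$" and "the only platonic solids that can be formed with at most $10$ points"), whereas you formalize it properly via the exceptional orbit sizes of each finite M\"obius group ($2$ and $n$ for $C_n$; $2$, $n$, $n$ for the dihedral group of order $2n$; $4,4,6$ for $A_4$; $6,8,12$ for $S_4$; $12,20,30$ for $A_5$) together with the constraint $\deg q_i = \sum_O |O|\,m_O$. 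This buys you two things the paper's sketch does not: a genuinely airtight exclusion of $A_5$ (minimal orbit $12 > 10$), and the sharper observation that $10$ is not a sum of $S_4$-orbit sizes, so the octahedral group can only occur when $q_0 = 0$ and the degree-$8$ configuration is the cube orbit --- a fact the paper only recovers later, in the proof of its Lemma on $10$-point configurations. One cosmetic caveat: your dihedral bound naturally comes out as order $\le 20$ (an orbit of size $n \le 10$), which matches the paper's later statements and main theorem rather than the looser phrasing "at most $10$ elements" in the lemma itself; this is an inconsistency in the paper's wording, not a flaw in your argument.
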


\begin{proof}
We have $q_0$ (resp. $q_1$) is of degree 10 (resp. $8$) so it will map up to $10$ (resp. 8) points in $\mathbb{CP}^1$ counting multiplicities. Taking the stereographic projection onto the unit sphere in $\mathbb{R}^3$ we see the configuration of points would only allow for, up to a Mobius tranformation, an $n$-gon with at most $n = 10$ and the only platonic solids that can be formed with at most $10$ points is the tetrahedron, octahedron and cube.
\end{proof}

\begin{remark}
If $q_0$ and $q_1$ has only one or two factors then we can adjoin it with $q_j$ for $j > 1$ to obtain at least 3 linear factors to map into $\mathbb{CP}^1$ and the lemma would still hold. 
\end{remark}

\begin{corollary}
Let $K$ be the image of $Aut(S)$ in the group of Mobius transformations, then $|K| \leq 24$.
\end{corollary}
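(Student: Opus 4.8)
The plan is straightforward: this corollary is an immediate consequence of the preceding lemma once one records the orders of the finitely many groups that can occur. First I would recall the standing normalization from Section~\ref{algPic}: a finite subgroup of the Mobius group is conjugate into $SO(3,\mathbb{R})$, and every Mobius transformation acts on the Riemann sphere by an orientation-preserving isometry. Hence the phrase ``symmetry group of a platonic solid'' in the lemma must be read as the group of \emph{rotational} (orientation-preserving) symmetries, not the full symmetry group that includes reflections. With this understood, the lemma restricts $K$ to exactly four types.

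Next I would simply tabulate the orders of these four types and take the maximum. A cyclic group of order at most $10$ has order $\le 10$, and a dihedral group with at most $10$ elements has order $\le 10$ by hypothesis. The rotation group of the tetrahedron is $A_4$, of order $12$, and the rotation group of the octahedron (equivalently of its dual, the cube) is $S_4$, of order $24$. Taking the maximum of the four bounds $10$, $10$, $12$, $24$ yields $|K| \le 24$, as claimed; moreover this bound is sharp, being attained exactly in the octahedral/cube case.

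There is essentially no analytic obstacle here, since all the real content sits in the preceding lemma, which already eliminated the icosahedral/dodecahedral possibility (whose rotation group $A_5$ has order $60$) via the point count on $q_0$ and $q_1$. The one place requiring care is precisely the orientation-preserving caveat noted above: one must resist using the full symmetry groups, namely $S_4$ of order $24$ for the tetrahedron or $S_4\times C_2$ of order $48$ for the cube, which would be the relevant groups if reflections were permitted. Because $K$ lies inside the Mobius group, only the rotational subgroups are available, and this is exactly what keeps the final bound at $24$ rather than $48$.
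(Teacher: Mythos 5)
Your proof is correct and matches the paper's (implicit) argument: the corollary is stated without proof precisely because it follows by taking the maximum of the orders allowed by the preceding lemma, namely $10$, $10$, $12$, and $24$. Your remark that only the rotational symmetry groups are available (since finite subgroups of the Mobius group land in $SO(3,\mathbb{R})$) is a worthwhile clarification consistent with the paper's setup in Section \ref{algPic}.
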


\begin{proposition}
\label{moreTwoFactors}
Given $S$ as before with $|Y| \geq 3$, then $|Aut(S)| \leq 240$.
\end{proposition}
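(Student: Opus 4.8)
The plan is to split $Aut(S)$ into the part that acts nontrivially on the Riemann sphere and the part that acts by scaling, and to bound each factor separately. By Proposition \ref{form} every $\phi \in Aut(S)$ is recorded by a diagonalizable matrix $A \in GL(2,\mathbb{C})$ acting on $\mathbb{C}[x,y]_1$; projectivizing this two-dimensional space as in Section \ref{algPic} produces a group homomorphism
\[
\rho \colon Aut(S) \longrightarrow \{\text{Mobius transformations}\}
\]
whose image is exactly the finite subgroup $K$ analyzed above.

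First I would pin down $\ker\rho$. A matrix in $GL(2,\mathbb{C})$ induces the identity Mobius transformation precisely when it is scalar, so $\ker\rho$ consists of those $\phi$ whose associated matrix has the form $\lambda I_2$. These are exactly the scaling matrices, hence $\ker\rho = G$ in the notation of Corollary \ref{center}, and that corollary gives $|\ker\rho| = |G| \leq 10$. Here Lemma \ref{scale} is what guarantees that such a $\lambda$ is a root of unity of controlled order, so that $G$ is genuinely finite and closed under composition.

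With kernel and image identified, I would assemble the short exact sequence
\[
1 \longrightarrow G \longrightarrow Aut(S) \xrightarrow{\ \rho\ } K \longrightarrow 1,
\]
so that $|Aut(S)| = |G|\cdot|K|$. The preceding corollary bounds the image by $|K| \leq 24$, the largest admissible value being the order of the octahedral/cube symmetry group among the allowed cyclic, dihedral, and platonic possibilities. Combining the two estimates yields $|Aut(S)| = |G|\cdot|K| \leq 10 \cdot 24 = 240$.

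The one point requiring care — and the only place the argument can go wrong — is the identification $\ker\rho = G$: one must confirm that an automorphism acts trivially on the point configuration in $\mathbb{CP}^1$ if and only if its associated matrix scales $x$ and $y$ by a common scalar, and that every such scaling is already accounted for by Corollary \ref{center}. Once that is settled, the bound follows purely from multiplicativity of orders along the exact sequence, with no further casework.
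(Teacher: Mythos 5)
Your proposal is correct and follows essentially the same route as the paper: both identify the kernel of the map to the Möbius group as the scalar (scaling) automorphisms bounded by $10$ via Corollary \ref{center}, bound the image by $24$ via the preceding corollary, and multiply. The only cosmetic difference is that you justify the kernel identification more explicitly (scalar matrices are exactly those inducing the identity Möbius transformation), which the paper states more briefly.
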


\begin{proof}
Let $M := PSL(2,\mathbb{C})$ be the group of Mobius tranformations on $\mathbb{CP}^1$, then from the previous discussion there is a group morphism $\phi: Aut(S) \rightarrow M$. We see that $|\phi(Aut(S))| \leq 24$ and $ker(\phi)$ are the elements of $Aut(S)$ that fixes the linear factors of $p(x,y)$ which comes from scaling and thus $|ker(\phi)| \leq 10$. Putting these two information together we get:
\[
|Aut(S)| = |\phi(Aut(S)| |ker(\phi)| \leq 240
\]
\end{proof}

The morphism of $Aut(S) \rightarrow M$ will give the second part of the classification of isomorphism classes of $Aut(S)$, the first part being remark \label{form1}
\begin{corollary}
\label{form2}
Under the assumption that $|Y| \geq 3$, we have that:
\[
Aut(S) \cong C_m \rtimes T
\]
where $C_k$ is a cyclic subgroup of with $k \leq 10$ and $T$ is a finite subgroup of the Mobius transformations.
\end{corollary}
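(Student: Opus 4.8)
The plan is to realize $Aut(S)$ as an extension of its image in the group of Mobius transformations by the central cyclic group of scaling matrices, and then to split this extension. First I would record the short exact sequence coming from the homomorphism $\phi : Aut(S) \to M = PSL(2,\mathbb{C})$ used in the proof of Proposition \ref{moreTwoFactors}. Writing $T := \phi(Aut(S))$ and $N := \ker(\phi)$, this gives
\[
1 \longrightarrow N \longrightarrow Aut(S) \xrightarrow{\ \phi\ } T \longrightarrow 1,
\]
where $T$ is one of the finite subgroups of the Mobius transformations classified earlier (cyclic, dihedral, or the symmetry group of a platonic solid), so in particular a finite subgroup of $M$ as required.

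Next I would identify $N$ precisely. Since Proposition \ref{form} embeds $Aut(S) \hookrightarrow GL(2,\mathbb{C})$ and $\phi$ is the restriction to $Aut(S)$ of the quotient $GL(2,\mathbb{C}) \to PGL(2,\mathbb{C}) = M$, an element lies in $N$ exactly when its associated matrix $A$ induces the identity Mobius transformation, i.e. when $A$ is scalar. Thus $N$ is exactly the subgroup of scaling matrices inside $Aut(S)$; being a finite subgroup of $\mathbb{C}^{\ast}\cdot I_2$, it is cyclic, so $N \cong C_m$, and $m \leq 10$ by Corollary \ref{center}. Because scalar matrices are central in $GL(2,\mathbb{C})$, the subgroup $N$ is central in $Aut(S)$, so the sequence above is a central extension.

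Finally I would split the sequence by constructing a section $s : T \to Aut(S)$. The idea is to lift a generating set of $T$ to elements of $Aut(S)$ of equal order (each Mobius transformation of finite order lifts to a diagonalizable matrix with root-of-unity eigenvalues by Proposition \ref{form}), and then to check that the defining relations of $T$ persist after lifting, correcting by central elements of $N$ where needed. The hard part will be this last verification: a central extension by $C_m$ need not split in general, the obstruction living in the relevant second cohomology (for instance the binary dihedral and binary polyhedral subgroups of $GL(2,\mathbb{C})$ are precisely non-split such extensions). To control this I would use the constraints imposed by $S$. When $\gcd(m, |T|) = 1$ the splitting is immediate from Schur--Zassenhaus, so the residual work is to rule out the non-split lifts in the finitely many cases where $m$ and $|T|$ share a common factor. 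This I expect to handle by running through the possible $T$ (cyclic, dihedral, $A_4$, $S_4$, $A_5$) and exploiting that every element of $Aut(S)$ acts on the fixed configuration $Y \subset \mathbb{CP}^1$ by a diagonalizable matrix with root-of-unity eigenvalues, which pins down the admissible lifts and forces the existence of a complement $T' \cong T$ with $N \cap T' = \{1\}$ and $N\,T' = Aut(S)$.
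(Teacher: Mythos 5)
Your first two paragraphs reproduce exactly the reasoning the paper relies on (and, in fact, all the paper actually supplies): the homomorphism $\phi\colon Aut(S)\to M$ from Proposition \ref{moreTwoFactors} gives a short exact sequence with kernel the central cyclic group of scalar matrices, of order at most $10$ by Corollary \ref{center}, and image a finite subgroup of the Mobius transformations. That part is correct, and you are also right --- and more careful than the paper, which states the corollary with no proof at all --- to flag that the real content is the splitting of this central extension.

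The gap is that your third paragraph is a plan rather than an argument, and the plan as described would not close the case that matters. Schur--Zassenhaus disposes of $\gcd(m,|T|)=1$, but the dangerous cases are precisely the non-coprime ones: the binary dihedral and binary polyhedral subgroups of $GL(2,\mathbb{C})$ are non-split central extensions of exactly the groups $T$ on your list (for instance the binary icosahedral group $2I$ surjects onto $A_5$ with central kernel $C_2$ and contains no copy of $A_5$). Your proposed tool for ruling these out --- that every element of $Aut(S)$ is diagonalizable with root-of-unity eigenvalues --- cannot distinguish $2I$ from $C_2\times A_5$, since every element of $2I$ also has that property. To actually force a complement you would have to use the invariance of the degree-$10$ form (equivalently, the action on the marked configuration $Y\subset\mathbb{CP}^1$ with its multiplicities and $z$-grading) in a substantive way, e.g. to show that suitable preimages of generators of $T$ can be normalized to have the same order as their images; as written, the existence of the section $s\colon T\to Aut(S)$ is asserted, not established. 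Note also that since the kernel is central, a splitting would give $C_m\times T$, so the $\rtimes$ in the statement is at best redundant; that is a defect of the paper's formulation rather than of your proposal.
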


For the final step to get to $|Aut(S)| \leq 200$, we need to show that the octahedral group action on the Riemann sphere is not compatible with mapping $10$ points back to itself on $\mathbb{CP}^1$ which is the only part of the equation of $S$ that allows a scaling by a $10$-th root of unity. 

\begin{lemma}
\label{final}
Let $T$ be the group of Mobius transformations that map $10$ points, counting multiplicities with at least 3 distinct points, back to itself then $|T| \leq 24$.
\end{lemma}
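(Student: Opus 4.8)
The plan is to reinterpret the $10$ points, counted with multiplicity, as a $T$-invariant effective divisor $D = \sum_i \mu_i [p_i]$ of degree $10$ on $\mathbb{CP}^1$, and to bound $|T|$ by an orbit-counting argument. First I would record finiteness: since $\operatorname{supp}(D)$ contains at least three distinct points and a Möbius transformation fixing three points is the identity, the action of $T$ on these points is faithful, so $T$ is a genuine finite subgroup of $PSL(2,\mathbb{C})$. By the classification already invoked in the previous proposition, $T$ is therefore cyclic, dihedral, or the symmetry group of the tetrahedron ($A_4$, order $12$), of the octahedron/cube ($S_4$, order $24$), or of the icosahedron/dodecahedron ($A_5$, order $60$).

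The key tool is that $\operatorname{supp}(D)$ is a $T$-invariant finite set, hence a disjoint union of $T$-orbits, and because $T$ preserves $D$ it acts transitively on each orbit while preserving multiplicities; thus all points of a single orbit $O$ carry a common multiplicity $\mu_O \ge 1$. The orbit decomposition then yields the single constraint
\[
\sum_{O} |O|\,\mu_O \;=\; \deg D \;=\; 10 .
\]
Each orbit size $|O|$ equals $|T|$ divided by the order of a point-stabilizer, so for the polyhedral groups the admissible values of $|O|$ are exactly the indices of the vertex/edge/face stabilizers: $\{4,6,12\}$ for $A_4$, $\{6,8,12,24\}$ for $S_4$, and $\{12,20,30,60\}$ for $A_5$; for $C_n$ the sizes are $1$ (the two fixed poles) and $n$, and for $D_n$ (order $2n$) they are $2$, $n$, and $2n$.

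With this in hand the proof reduces to arithmetic. For $C_n$ the hypothesis of at least three distinct points forces at least one non-fixed orbit, of size $n$, into the sum, so $n \le 10$ and $|T| \le 10$. For $D_n$ the same reasoning forces an orbit of size $n$ (or $2n$) into the sum, giving $n \le 10$ and $|T| = 2n \le 20$; the regular decagon shows $|T| = 20$ does occur. For $A_4$ the decomposition $10 = 4 + 6$ is admissible, so $|T| = 12$ is possible. The crucial point is the exclusion of the two largest groups. Every $S_4$-orbit has size $6$, $8$, $12$, or $24$, so a single orbit contributes a positive multiple of one of these, none of which equals $10$, while two or more orbits contribute at least $6 + 6 = 12 > 10$; hence $10$ cannot be realized and no $S_4$-invariant degree-$10$ divisor exists. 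Likewise every $A_5$-orbit has size at least $12 > 10$, so $A_5$ is excluded outright. Therefore $T$ is cyclic, dihedral, or tetrahedral, whence $|T| \le 20 \le 24$.

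The heart of the argument, and the step that actually delivers the improvement from $240$ to $200$ in the main theorem, is the octahedral exclusion: it is precisely the observation that $10$ is not representable by the octahedral orbit sizes $\{6,8,12,24\}$. I expect this short arithmetic to be the main obstacle, in that it depends on correctly identifying the orbit sizes as the stabilizer indices and on the fact that multiplicities are constant along orbits. The only other care needed is invoking the ``at least three distinct points'' hypothesis to guarantee that a bounded-size orbit genuinely enters the sum in the cyclic and dihedral cases, which is exactly what prevents $|T|$ from being large there.
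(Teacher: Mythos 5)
Your proof is correct and follows the same overall strategy as the paper: identify $T$ with a finite subgroup of $SO(3,\mathbb{R})$ via stereographic projection, invoke the classification into cyclic, dihedral, and polyhedral groups, and eliminate the octahedral and icosahedral groups by counting the $10$ marked points. Where you differ is in how the elimination is executed: the paper argues informally about placing $10$ points among the $6$ or $8$ vertices of an octahedron or cube and concludes only that a proper subgroup of the octahedral group, of order at most $12$, could act, whereas your orbit decomposition $\sum_O |O|\,\mu_O = 10$ combined with the orbit sizes $\{6,8,12,24\}$ shows outright that no $S_4$-invariant effective divisor of degree $10$ exists; the same arithmetic disposes of $A_5$ since every orbit there has size at least $12$, which is the substance of the paper's ``not enough points to form an icosahedron'' remark. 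Your version also supplies two details the paper leaves implicit: the finiteness of $T$ (faithfulness of the permutation action on at least three distinct points) and the constancy of multiplicities along orbits, which is what licenses the degree equation. One small inaccuracy: $S_4$ has a unique exceptional orbit of each size, so two distinct orbits contribute at least $6+8=14$ rather than $6+6=12$, but your bound holds a fortiori. Both arguments in fact yield $|T| \leq 20$, stronger than the stated $|T| \leq 24$.
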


\begin{proof}
Given $T$ is a finite subgroup of Mobius tranformations, $H$ is conjugate to a subgroup of $PSU(2,\mathbb{C})$, \cite{Shurman97}, thus, without loss of generalities, we can assume $T$ is a subgroup of $PSU(2,\mathbb{C}) \cong SO(3, \mathbb{R})$. The Mobius transformation and stereographic projection will map the $10$ points to $10$ points on the sphere $S^2 \subset \mathbb{R}^3$. We see in this case that we are looking at the possibly rotational symmetries of $10$ points on the sphere and their possible configurations. 

The cyclic groups of order up to $9$ and dihedral group of order $20$ can be realized from the symmetry of the regular $9$-gon with a fixed point and symmetry of a decagon, respectively. So we only need to rule out the octahedral group and the icosahedral group. For the icosahedral group we need $12$ vertices to form an icosahedron embedded in $S^2 \subset \mathbb{R}^3$. As we only have $10$ points, we do not have enough points to form an icosahedron on $S^2 \subset \mathbb{R}^3$, thus we do not have enough points to realize the icosahedral group.

The octahedral group is the symmetry of the octahedron or the cube, which only needs $6$ and $8$ vertices respectively on $S^2\subset \mathbb{R}^3$. There is $10$ points to place so there will be $4$ and $2$ extra points respectively to place on $S^2$. If we allow multiplicities, then we can only map vertices with the same multiplicities to each other. In which case, with $10$ points we can only realize a proper subgroup of the octahedral group as the automorphisms that map $10$ points with multiplicities to itself and so will have order $\leq 12$. If we do not have multiplicites then the orbit of a point not on a rotational axis under the action of the octahedral group will be $24$ points which is more than the $10$ marked points on the sphere.

So we see that with $10$ points counting multiplicities on $\mathbb{CP}^1$, we can not realize the octahedral and icosahedral groups in mapping of these 10 points back to itself, so $|T| \leq 20$ when $q_0 \neq 0$.
\end{proof}

\subsection{Geometric picture}
\label{geoPic}

Lastly, in addition to the algebraic pictures, we can realize these arguments geometrically though the canonical map $\phi_{|K_S|}:S \dashrightarrow \mathbb{P}^1$ and, corresponding to the algebraic arguments, there are also a geometric arguments. In particular, we can realized the map of the equation $w^2 - F_{10}$ onto the $\mathbb{CP}^1$ by the canonical map $\phi_{|K_S|}: S \dashrightarrow \mathbb{CP}^1$. 

In this setting, the fibers over the points that vanish at one of the factors of $q_i$ are special fibers and $Aut(S)$ is equivariant on the canonical map, so $Aut(S)$ induces a Mobius transform on $\mathbb{CP}^1$ which must map these special fibers to themselves. This we realize as the subgroup of Mobius transforms that map $m$ marked points graded by multiplciites and singular fiber type to themselves as mentioned in \ref{algPic}

Continuing with the analog between the algebraic and geometric, there is a group morphism $\psi: Aut(S) \rightarrow Mob(\mathbb{CP}^1)$, and we understand  how $\psi(Aut(S))$ acts on the special fibers of $\phi_{|K_S|}: S \dashrightarrow \mathbb{CP}^1$ over the points of $\mathbb{CP}^1$ where some $q_i$ vanishes. Furthermore, $H := ker(\psi) \leq Aut(S)$ fixes the fibers of $\phi_{|K_S|}: S \dashrightarrow \mathbb{CP}^1$ since $H \mapsto id \in Mob(\mathbb{CP}^1)$ and from corollary \ref{center}, $H$ is contained in the center of $Aut(S)$ and is in fact a cyclic subgroup of even order at most $10$.

From this geometric picture, we show one more constraint on $H$:
\begin{proposition}
$H$, as defined above, is a cyclic subgroup of order $2,4,8$ or $10$.
\end{proposition}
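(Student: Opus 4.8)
The plan is to pin $H$ down as an explicit group of roots of unity and then exclude the single remaining even order, namely $6$. Since we already know from the preceding discussion that $H = \ker(\psi)$ is cyclic of even order at most $10$, the only possibilities are $2,4,6,8$ and $10$, so everything reduces to ruling out $6$.

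First I would identify $H$ concretely. An element of $H$ maps to the identity in $Mob(\mathbb{CP}^1)$, so its associated matrix $A \in GL(2,\mathbb{C})$ acts trivially on $\mathbb{P}(\mathbb{C}[x,y]_1) = \mathbb{CP}^1$; hence $A = \lambda I_2$ is a scalar matrix, and $H$ is exactly the subgroup of scaling automorphisms of Corollary \ref{center}. Writing the defining equation as $w^2 - \sum_{i=0}^5 q_i(x,y)z^i$ and using the normalization of Proposition \ref{form} (so that $\phi$ fixes $w$ and $z$), the automorphism attached to $\lambda I_2$ sends $q_i(x,y)$ to $\lambda^{10-2i}q_i(x,y)$. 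Invariance of the defining equation then forces $\lambda^{10-2i}=1$ for every $i \in \{0,1,2,3,4\}$ with $q_i \neq 0$ (the top term $q_5 z^5$ is a constant multiple of $z^5$ and imposes no condition), which is precisely the content of Lemma \ref{scale}. Consequently $H$ is the full group of $d$-th roots of unity with $d = \gcd\{\,10-2i : q_i \neq 0,\ 0 \le i \le 4\,\}$, so $|H| = d$; each contributing degree $10-2i$ lies in $\{10,8,6,4,2\}$ and is even, which re-derives that $|H|$ is even.

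Finally I would rule out $|H| = 6$. For $6$ to divide $d$ we would need $6 \mid (10-2i)$ for every index $i \le 4$ with $q_i \neq 0$; but among $i \in \{0,1,2,3,4\}$ the only solution of $6 \mid (10-2i)$ is $i = 2$. Thus $|H| = 6$ would require $q_0 = q_1 = q_3 = q_4 = 0$, and in particular $q_0 = q_1 = 0$, contradicting the lemma guaranteeing $q_0 \neq 0$ or $q_1 \neq 0$ (which expresses normality of $S$). Hence $6 \nmid |H|$, and combined with $|H|$ even and at most $10$ we conclude $|H| \in \{2,4,8,10\}$.

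The only genuine obstacle is the first step: correctly recognizing $H$ as the scalar subgroup and translating ``fixes the defining equation'' into the conditions $\lambda^{10-2i}=1$. Once that dictionary is in place, excluding $6$ is the elementary observation that $6$ divides $10-2i$ only for $i=2$, so order $6$ would annihilate both $q_0$ and $q_1$ against normality.
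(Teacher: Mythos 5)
Your proof is correct, but it takes a genuinely different route from the paper's. You work entirely on the algebraic side: after identifying $H=\ker(\psi)$ with the subgroup of scalar matrices $\lambda I_2$ (in the normalization of Proposition \ref{form}), you read off from invariance of $w^2-\sum_{i=0}^5 q_i(x,y)z^i$ that $|H|=\gcd\{\,10-2i : q_i\neq 0,\ 0\le i\le 4\,\}$, and then normality ($q_0\neq 0$ or $q_1\neq 0$) forces this gcd to be an even divisor of $10$ or of $8$, hence to lie in $\{2,4,8,10\}$ --- note that this formulation even makes the separate exclusion of $6$ redundant, since $6$ divides neither $10$ nor $8$. The paper instead argues geometrically: it resolves the base point of $|K_S|$ to obtain a relatively minimal genus $2$ fibration, realizes $H$ as relative automorphisms of the generic genus $2$ fibre fixing a Weierstrass point, and rules out an element of order $3$ because it would induce a M\"obius transformation of $\mathbb{P}^1_{\mathcal{K}}$ fixing three of the six branch points and hence be the identity. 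Your argument is more elementary and actually yields more --- an exact formula for $|H|$ in terms of which $q_i$ vanish --- whereas the paper's buys a geometric interpretation of $H$ as the fibrewise automorphisms of the canonical pencil. The only point to make explicit is the first step of your dictionary: that acting trivially on the base of the canonical pencil is equivalent to acting by a scalar on $H^0(\omega_S)=\langle x,y\rangle$, which is immediate since a linear map inducing the identity on $\mathbb{P}^1$ must be scalar.
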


\begin{proof}
To show this we only need to show that $|H| \neq 6$. We start by resolving the basepoint of $\phi_{|K_S|}$ to obtain a morphism $\phi:\hat{S} \rightarrow \mathbb{CP}^1$ which is a relatively minimal genus $2$ fibration, \cite[Lemma 2.1]{CatanesePignatelli2006}. Now, $H$ fixes the fiber as well as the exceptional locus of $\hat{S} \rightarrow S$, which means that $H$ is a relative automorphism of the genus 2 fibration and so can be realized as the automorphism of a genus 2 curve over the function field, $\mathcal{K}$, of $\mathbb{CP}^1$. As $H$ must fix the exceptional curve, it must also fix a point on this genus 2 curve over $\mathcal{K}$. As this exceptional curve is fixed by the involution on the genus 2 fibration, it must a ramification point of the genus 2 curve over $\mathcal{K}$. 

So $H$ acts on a genus $2$ curve which fixes a ramification point, thus elements of $H$ can only permute the other $5$ ramification points and can not fix just $2$ points. Looking at the cycles of permutations, if $\sigma \in H$ is of order $3$ then it must act on the $6$ ramification points on $\mathbb{P}_K^1$ as a composition of $1$-cycle, $2$-cycle, and $3$-cycle or three $1$-cycle and a $3$-cycle. In either of those cases we have that this can not be since then we have that $\sigma$ or $\sigma^2$ fixes three elements of $\mathbb{P}_\mathcal{K}^1$ and so must be the identity and is not of order $3$. Thus $H$ cannot have an element of order $3$ which means it cannot be of order $6$.
\end{proof}

\section{Conclusion}
From proposition \ref{twoFactors}, \ref{moreTwoFactors} and lemma \ref{final}, we have shown the following:
\begin{proposition}
\label{bound}
Let $S$ be a minimal surface of general type such that $K_S^2 = 1$ and $p_g = 2$, then:
\[
|Aut(S)| \leq 200
\]
\end{proposition}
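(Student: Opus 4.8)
The plan is to combine the three preceding results through a case analysis on the size of $Y$, the set of distinct linear factors of the $q_i$. Recall from Section 2 that $|Y| \geq 2$ always, since $|Y| = 1$ forces $S$ to be non-normal, so it suffices to treat the two regimes $|Y| = 2$ and $|Y| \geq 3$ separately. In the first regime there is nothing left to prove: Proposition \ref{twoFactors} already yields $|Aut(S)| \leq 200$ directly. The work is therefore entirely in the regime $|Y| \geq 3$, where Proposition \ref{moreTwoFactors} gives only the weaker bound $|Aut(S)| \leq 240$, and the task is to shave this down to $200$.

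For $|Y| \geq 3$ I would start from the short exact sequence
\[
1 \longrightarrow \ker(\phi) \longrightarrow Aut(S) \xrightarrow{\ \phi\ } T \longrightarrow 1,
\]
where $\phi \colon Aut(S) \to PSL(2,\mathbb{C})$ is the morphism to the group of Mobius transformations and $T = \phi(Aut(S))$. By Corollary \ref{center} the kernel, which consists precisely of the scaling automorphisms, has order at most $10$, while the corollary following the configuration lemma bounds $|T| \leq 24$. The crude product $10 \cdot 24 = 240$ is exactly Proposition \ref{moreTwoFactors}; the point is that these two extremal bounds cannot be attained \emph{simultaneously}, and the dichotomy is governed by whether $q_0$ vanishes.

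I would split accordingly. If $q_0 \neq 0$, then $q_0$ contributes $10$ points counted with multiplicity on $\mathbb{CP}^1$, so Lemma \ref{final} applies and gives $|T| \leq 20$; combined with $|\ker(\phi)| \leq 10$ this yields $|Aut(S)| \leq 200$. If instead $q_0 = 0$, then normality forces $q_1 \neq 0$, and since a scaling automorphism $x,y \mapsto \lambda x, \lambda y$ must fix $q_1$, which has degree $8$, we obtain $\lambda^8 = 1$ and hence $|\ker(\phi)| \leq 8$; with the general bound $|T| \leq 24$ this gives $|Aut(S)| \leq 192 < 200$. In both sub-cases the desired inequality holds, which completes the argument together with the $|Y| = 2$ case.

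The main obstacle is the bookkeeping that makes the two extreme estimates mutually exclusive. One must verify that the scaling group can reach order $10$ only when $q_0 \neq 0$: by Lemma \ref{scale} a scaling by $\lambda$ satisfies $\lambda^{2k} = 1$ with $k = 5 - i$ over the indices of the nonzero pieces $q_i$, and $\lambda^{10} = 1$ is permitted precisely when the degree-$10$ piece $q_0$ is present. This is exactly the configuration in which Lemma \ref{final} excludes the octahedral group and caps $|T|$ at $20$. Pinning down this matching—rather than carrying out any single computation—is what converts the crude bound $240$ into the sharp $200$.
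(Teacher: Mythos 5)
Your proposal is correct and follows essentially the same route as the paper: the paper's proof of this proposition is literally the one-line combination of Propositions \ref{twoFactors}, \ref{moreTwoFactors} and Lemma \ref{final}, and your case analysis ($|Y| = 2$ versus $|Y| \geq 3$, then $q_0 \neq 0$ versus $q_0 = 0$) is exactly the glue the paper leaves implicit. If anything, your explicit handling of the $q_0 = 0$ subcase (where $\lambda^8 = 1$ forces $|\ker(\phi)| \leq 8$, giving $8 \cdot 24 = 192$) is more careful than the paper, whose Lemma \ref{final} only records the bound $|T| \leq 20$ under the hypothesis $q_0 \neq 0$.
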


The above bound of $200$ is in fact sharp from the following example:
\begin{example}
Consider the surface defined by the polynomial
\[
w^2 - z^5 - x^{10} - y^{10}
\]
then $Aut(S)$ is generated by scaling of $x$ and $y$ by tenth powers individually and swapping the $x$ and $y$ terms. From those generators, there are $200$ automorphisms. The analysis above will in fact show that $Aut(S) \cong C_{10} \times D_{10}$ where $C_{10}$ is the cyclic group of $10$ elements and $D_{10}$ is the dihedral group of order $20$.
\end{example}

In terms of the order of the automorphism group of $S$, the above analysis puts an upper bounds to the number of possible prime factors.

\begin{proposition}
\label{prime}
If $p$ is a prime number that divides $|Aut(S)|$, then $p \leq 31$ and $p \neq 29$. Furthermore, when $p \geq 11$ then $S$ is defined by an equation:
\[
w^2 - a z^2 - \sum_{i = 1}^5 q_i(x,y)z^i
\]
such that there are two homogeneous linear polynomials $L_1, L_2 \in \mathbb{C}[x,y]$ with $L_1 \neq \alpha L_2$ which are the only prime factors of $q_i$ for all $i$.
\end{proposition}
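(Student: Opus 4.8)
The plan is to reduce everything to the existence of a single automorphism of prime order and then to feed this into the two-factor analysis of Proposition~\ref{twoFactors}. First I would invoke Cauchy's theorem: if a prime $p$ divides $|Aut(S)|$, then $Aut(S)$ contains an element $\phi$ of order exactly $p$, whose associated matrix $A \in GL(2,\mathbb{C})$ (from Proposition~\ref{form}) is diagonalizable with $A^p = I_2$, so its eigenvalues $\alpha, \beta$ are $p$-th roots of unity, not both trivial. For $p \le 7$ there is nothing to prove, so assume $p \ge 11$. If $A$ were scalar, Lemma~\ref{scale} would force its order to divide some $2k \le 10$, contradicting order $p \ge 11$; hence $\alpha \ne \beta$, and since $p$ is prime, $\mathrm{lcm}(\mathrm{ord}\,\alpha, \mathrm{ord}\,\beta) = p$ forces at least one of $\alpha, \beta$ to be a primitive $p$-th root of unity and $\alpha/\beta$ to have order $p$.

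Next I would extract the special form. Diagonalizing $A$ amounts to a linear change of the coordinates $x, y$ (keeping $z, w$ fixed as in Proposition~\ref{form}), after which $\phi$ acts by $x \mapsto \alpha x$, $y \mapsto \beta y$ with eigenvectors $L_1 = x$, $L_2 = y$ in the original coordinates. Writing $F_{10} = \sum_{i=0}^5 q_i(x,y) z^i$, invariance of the defining equation forces each $q_i$ to be invariant under this scaling, i.e. a sum of monomials $x^a y^{d_i - a}$ with $d_i = 10 - 2i$ and $(\alpha/\beta)^a = \beta^{-d_i}$. Two distinct admissible exponents $a, a'$ would satisfy $p \mid (a - a')$, impossible since $|a - a'| \le d_i \le 10 < p$; hence each nonzero $q_i$ is a single monomial $c_i L_1^{a_i} L_2^{b_i}$. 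This is exactly the asserted form: every $q_i$ is a product of powers of the two fixed, linearly independent linear forms $L_1, L_2$. In particular $|Y| = 2$, so the presence of a prime $p \ge 11$ already confines us to the two-factor situation.

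With this reduction in hand, the bound follows from the arithmetic of Proposition~\ref{twoFactors}. That proposition shows any scaling automorphism $(x,y) \mapsto (\alpha x, \beta y)$ fixing the equation satisfies $\alpha^N = \beta^N = 1$, where $N = |a_i d_j - a_j d_i|$ is built from a non-degenerate pair of the nonzero monomials (such a pair exists because finiteness of $Aut(S)$ forbids all exponent vectors from being proportional), and $N \le 80$. Since our $\alpha$, say, is a primitive $p$-th root of unity, $p = \mathrm{ord}(\alpha) \mid N$. Because every $d_i = 10 - 2i$ is even, $N$ is even, so $p$ odd forces $2p \mid N \le 80$, whence $p \le 37$. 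It then remains only to exclude $p = 37$ and $p = 29$, i.e. to show $N$ cannot equal $74$ or $58$.

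The crux is this last Diophantine step. Writing $d_i = 2 d_i'$ with $d_i' = 5 - i \in \{1,2,3,4,5\}$ and $0 \le a_i \le 2 d_i'$, one has $N = 2\,|a_i d_j' - a_j d_i'|$, so I must show $M := |a_i d_j' - a_j d_i'|$ never equals $29$ or $37$. Since $|M| \le 2 d_i' d_j'$, only the pairs $\{d_i', d_j'\}$ of large product, namely $\{5,4\}$ and $\{5,3\}$, can even approach these values, and a short finite check over the admissible $(a_i, a_j)$ in each case rules both out. This confirms $p \le 31$ and $p \ne 29$, and combined with the previous paragraph yields the full statement. The main obstacle I anticipate is not a single deep idea but the care needed to (a) guarantee that a genuinely non-degenerate pair of monomials exists, so that $N \ne 0$ and the divisibility $p \mid N$ is meaningful, and (b) organise the finite but slightly delicate case analysis excluding $N \in \{58, 74\}$ without omissions.
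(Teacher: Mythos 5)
Your argument is correct, and it rests on the same two pillars as the paper's proof---reducing large primes to the two-factor situation and then exploiting the determinant $N$ of Proposition \ref{twoFactors}---but it executes both halves by a genuinely different and more self-contained route. For the reduction, the paper simply appeals to the orbit/stabilizer analysis of Section 4 (``orbits and stabilizers are restricted to factors $\leq 10$'') to exclude $p>10$ when $|Y|\geq 3$; you instead take an order-$p$ element via Cauchy, rule out the scalar case with Lemma \ref{scale}, and observe that the eigenvalue ratio then has exact order $p>10\geq \deg q_i$, forcing each $q_i$ to be a single monomial in the eigenbasis. This is cleaner, and it has the added benefit of actually \emph{proving} the ``furthermore'' clause (the two independent linear forms $L_1,L_2$ are the eigenvectors), which the paper's proof only asserts in passing; your remark that finiteness of $Aut(S)$ forbids all exponent vectors from being proportional correctly supplies the nondegenerate pair needed to make $p\mid N$ meaningful and simultaneously excludes $|Y|=1$. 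For the arithmetic, the paper enumerates the whole list $N=|10p-qk|$ and inspects its prime factors; your parity observation (every degree $10-2i$ is even, so $N$ is even and hence $N\geq 2p$, giving $p\leq 37$) collapses the verification to excluding $N\in\{58,74\}$, and since $N=|n_id_j-n_jd_i|\leq d_id_j$ only the degree pairs $\{10,8\}$ and $\{10,6\}$ need checking---I confirm that the resulting congruence checks do rule out both values (in each case the forced exponents exceed the degree bounds). Your version also treats all pairs $(q_i,q_j)$ uniformly, whereas the paper's displayed list only records pairs involving the degree-$10$ coefficient $q_0$. In short: same strategy, but your write-up supplies the details the paper leaves implicit and streamlines the final finite check.
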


\begin{proof}
Computing the values of $N$ in proposition \ref{twoFactors}, produces a finite list defined below:
\[
N \in \{(10-q)p - q(k-p) | k \in \{2,4,6,8\}, 0\leq q \leq 10, 0 \leq p \leq k\}
\]
and the prime factors up to $31$ are realized in this list except for $29$. In computing the orbit types of $q_i$ in proposition \ref{moreTwoFactors}, the computation of orbits and stabilizers are restricted to factors $\leq 10$. So the prime factors are bounded above by $31$ and, furthermore, if $p > 10$, then we have to be in the case of proposition \ref{twoFactors}.
\end{proof}

\begin{example}
This bound on prime factors is sharp since the surface defined by $Z(w^2 - z^5 - xy^7z - x^9y) \subset \mathbb{P}(1,1,2,5)$ has canonical singularities and it's automorphism group is a cyclic group of order $62$ generated by the action:
\begin{align*}
x &\mapsto \zeta_{62}\\
y &\mapsto  \zeta_{62}^{53}
\end{align*} 
Thus this surface has an automorphism of order $31$. Furthermore we can realize all prime number $11 \leq p \leq 31$ and $p \neq 29$ by the following surfaces defined by the following equations:
\begin{itemize}
	\item $S = Z(w^2 - z^5 - x^2y^6z - x^8y^2)$, $Aut(S) \cong C_2 \times C_{22}$, subgroup of order $11$.
	\item $S = Z(w^2 - z^5 - x^2y^6z - x^9y)$, $Aut(S) \cong C_{52}$, subgroup of order $13$.
	\item $S = Z(w^2 - z^5 - xy^5z^2 - x^7yz)$, $Aut(S) \cong C_{34}$, subgroup of order $17$.
	\item $S = Z(w^2 - z^5 - xy^5z^2 - x^8y^2)$, $Aut(S) \cong C_{38}$, subgroup of order $19$.
	\item $S = Z(w^2 - z^5 - xy^7z - x^7y^3)$, $Aut(S) \cong C_{46}$, subgroup of order $23$.
	\item $S = Z(w^2 - z^5 - xy^7z - x^9y)$, $Aut(S) \cong C_{62}$, subgroup of order $31$.
\end{itemize}
\end{example}

Thus combining all the analysis of proposition \ref{bound}, proposition \ref{prime} with remark \ref{form1} and corollary \ref{form2}, we have a complete understanding of the automorphism group of minimal surfaces of general type with $K_S^2 = 1$ and $p_g = 2$, thus proving the main theorem.

\begin{theorem}
\label{main}
Let $S$ be a minimal surface of general type such that $K_S^2 = 1$ and $p_g = 2$, then we have the following:
\begin{enumerate}
	\item $|Aut(S)| \leq 200$
	\item $Aut(S)$ is isomorphic to one of the following:
		\[
		C_m \hspace{1cm} C_m \times C_n \hspace{1cm} C_m \rtimes T
		\]
		with $2 \leq m \leq 10$ and $2 \leq n $, where $C_t$ is the cyclic group of order $t$ and $T$ is isomorphic to one of the following:
		\begin{itemize}
			\item $C_k$ with $k \leq 10$
			\item $D_k$, the dihedral group, with $k \leq 10$ or with order at most $20$
			\item $S_4$
			\item $A_5$
			\item If $m \leq 8$ then possibly $S_5$
		\end{itemize}
	\item If $p$ is a prime number that divides $|Aut(S)|$, then $p \leq 31$ and $p \neq 29$.
\end{enumerate}
\end{theorem}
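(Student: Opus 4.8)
The plan is to assemble the theorem from the two mutually exclusive cases isolated earlier, organized by the size of the set $Y$ of distinct linear factors of the $q_i(x,y)$ in the expansion $F_{10} = \sum_{i=0}^5 q_i(x,y)z^i$; recall that $|Y|=1$ is impossible, so only $|Y|=2$ and $|Y|\geq 3$ occur. By Proposition \ref{form} every automorphism is realized by a diagonalizable $A \in GL(2,\mathbb{C})$ with root-of-unity eigenvalues that fixes each $q_i$, so the entire classification reduces to the combinatorics of these two regimes.

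First I would dispatch the case $|Y|=2$. Here the automorphisms act diagonally on $x,y$, so $Aut(S)$ is abelian; Proposition \ref{twoFactors} already gives $|Aut(S)| \leq 200$ and Remark \ref{form1} identifies the group as $C_m$ or $C_m \times C_n$. This simultaneously settles part (1) and the abelian branch of part (2) in this regime. For part (3) I would read off the admissible orders from the finite list of values $N = |n_q m_p - n_p m_q|$ tabulated in Proposition \ref{prime}: a direct inspection shows that every prime $\leq 31$ except $29$ occurs, and none larger, which is exactly where the large prime factors of $|Aut(S)|$ come from.

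Next I would treat $|Y| \geq 3$ through the homomorphism $\psi\colon Aut(S) \to PSL(2,\mathbb{C})$. Its kernel $H$ is the scaling subgroup, cyclic of order $2,4,8,$ or $10$ (Corollary \ref{center} together with the geometric-picture proposition), and its image $K$ is a finite Mobius group, hence cyclic, dihedral, or a platonic symmetry group, with $|K|\leq 24$ once the $\leq 10$ points coming from $q_0$ (or the $\leq 8$ from $q_1$) are accounted for; in particular the icosahedral group is excluded. This yields $Aut(S) \cong C_m \rtimes T$ (Corollary \ref{form2}) and the crude bound $|Aut(S)| = |H|\cdot|K| \leq 240$ (Proposition \ref{moreTwoFactors}), and the list of possible $T$ in part (2) is obtained by matching this finite-subgroup classification against the point-configuration constraints.

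The main obstacle is improving $240$ to the sharp bound $200$, which is precisely the content of Lemma \ref{final}. The value $240 = 10 \cdot 24$ could only be attained with $|H|=10$ and $K$ octahedral at once; but $|H|=10$ forces a scaling by a primitive tenth root of unity, which by Lemma \ref{scale} requires $q_0 \neq 0$, so all ten points are genuinely present, and Lemma \ref{final} shows that ten points (with at least three distinct) cannot carry octahedral symmetry, since a free octahedral orbit has $24$ points. Hence $|H|=10$ forces $|K|\leq 20$, while $|K|=24$ forces $|H|\leq 8$, so in either extremal regime the product is at most $200$; a short check over $|H| \in \{2,4,8,10\}$ against $|K|\leq 24$ confirms $|Aut(S)|\leq 200$ in all remaining cases, giving part (1) via Proposition \ref{bound}. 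Finally, since in the $|Y|\geq 3$ case $|Aut(S)|$ divides a product of numbers bounded by $10$ and $24$, only the primes $2,3,5$ appear there, so the larger primes of part (3) are confined to the $|Y|=2$ analysis already completed, which closes the proof.
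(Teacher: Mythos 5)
Your proposal is correct and follows essentially the same route as the paper, which proves Theorem \ref{main} by combining Propositions \ref{twoFactors}, \ref{moreTwoFactors}, \ref{bound} and \ref{prime} with Remark \ref{form1}, Corollary \ref{form2} and Lemma \ref{final} over the same case split $|Y|=2$ versus $|Y|\geq 3$. (One harmless slip: in the $|Y|\geq 3$ case the image $K$ may be cyclic of order $7$, so the primes arising there are $2,3,5,7$ rather than only $2,3,5$; this does not affect part (3), since the point is merely that no prime exceeding $10$ occurs outside the $|Y|=2$ analysis.)
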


\bibliographystyle{plain}
\bibliography{refs}{}

\end{document}